\documentclass[11pt,a4paper]{amsart}

\usepackage{fullpage}
\usepackage{amsmath}
\usepackage{amsfonts}
\usepackage{amssymb}
\usepackage{mathrsfs}
\usepackage{mathtools}
\usepackage{bbm}
\usepackage{tikz-cd}
\usepackage{url}
\usepackage[english]{babel}
\usepackage[utf8]{inputenc}
\usepackage{csquotes}
\usepackage{enumitem}
\usepackage{multicol}
\usepackage{calc}

\usepackage{array}

\setlist[itemize,1]{label={--\,}}
\setlist[enumerate,1]{label=(\roman*)}

\usepackage[backend=biber, maxbibnames=50, style=alphabetic, giveninits=true]{biblatex}
\renewbibmacro{in:}{}

\usepackage{hyperref}
\usepackage{thmtools}
\usepackage{cleveref}

\DeclareRobustCommand{\gobblefour}[4]{}

\makeatletter
\renewcommand{\tocsection}[3]{%
  \indentlabel{\@ifnotempty{#2}{\ignorespaces#1 \makebox[\widthof{00.}][l]{#2.}\quad}}#3}
\renewcommand{\tocsubsection}[3]{%
  \indentlabel{\@ifnotempty{#2}{\ignorespaces#1 \makebox[\widthof{00.0.}][l]{#2.}\quad}}#3}
\makeatother

\setlist[itemize]{leftmargin=*}
\setlist[enumerate]{leftmargin=*}

\newtheorem{thm}{Theorem}[section]
\newtheorem{defin}[thm]{Definition}
\newtheorem{conj}[thm]{Conjecture}
\newtheorem{prop}[thm]{Proposition}
\newtheorem{cor}[thm]{Corollary}

\newtheorem{assumpt}[thm]{Assumption}

\theoremstyle{definition}

\newtheorem{rem}[thm]{Remark}

\newtheorem{innercustomconj}{Conjecture}
\newenvironment{customconj}[1]
  {\renewcommand\theinnercustomconj{#1}\innercustomconj}
  {\endinnercustomconj}
  
\newcommand{\Q}{\mathbb Q}

\newcommand{\Z}{\mathbb Z}

\newcommand{\cyc}{\mathrm{cyc}}

\newcommand{\Fil}{\mathrm{Fil}}
\newcommand{\Gal}{\mathrm{Gal}}
\newcommand{\GL}{\mathrm{GL}}

\newcommand{\SL}{\mathrm{SL}}

\DeclareMathOperator{\Spm}{Spm}

\newcommand{\into}{\hookrightarrow}
\newcommand{\onto}{\twoheadrightarrow}
\newcommand{\ovl}{\overline}
\newcommand{\wtl}{\widetilde}

\newcommand{\ccirc}{\kern0.5ex\vcenter{\hbox{$\scriptstyle\circ$}}\kern0.5ex}

\newcommand{\bP}{\mathbb{P}}

\newcommand{\calH}{\mathcal{H}}
\newcommand{\cL}{\mathcal{L}}
\newcommand{\cM}{\mathcal{M}}
\newcommand{\cS}{\mathcal{S}}
\newcommand{\cO}{\mathcal{O}}

\newcommand{\Qp}{\overline{\Q}_p}

\newcommand{\vareps}{\varepsilon}

\newcommand{\Tr}{{\mathrm{Tr}}}

\title{$\cL$-Invariants and deep congruences between newforms}
\author{Andrea Conti and Peter Mathias Gr\"af}
\thanks{\emph{MSC2020:} 11F33, 11F85 \\ \indent
\emph{Keywords:} modular forms; Hecke eigensystems; global, local, semistable Galois representations; $\cL$-invariants; $p$-power congruences}

\begin{document}

\begin{abstract}
We study congruences modulo powers of a prime $p$ between pairs of $p$-new modular Hecke eigenforms of level $\Gamma_0(p)$ and same weight $k$. Based on explicit computations, we conjecture that every such eigenform $f$ admits a twin to which it is congruent modulo a surprisingly high power of $p$, whose exponent is close to the opposite of the valuation of the $\cL$-invariant of $f$, and whose Atkin--Lehner sign is opposite to that of $f$. This is a new phenomenon that is not explained by the known results on the $p$-adic variation of eigenforms. Inspired by the global picture, we formulate a local conjecture describing congruences between semistable representations of fixed weight, varying $\cL$-invariant, and opposite Atkin--Lehner signs. We give some theoretical evidence towards our conjectures.
\end{abstract}

\maketitle

\section*{Introduction}

Let $f$ be a modular Hecke eigenform of weight $k> 2$ and level $\Gamma_0(p)$ for some prime $p$. Assume that $f$ is $p$-new, which implies in particular that its $U_p$-eigenvalue is $\pm p^{\frac{k-2}{2}}$. It is understood by work of Hida, Coleman-Mazur and Buzzard that if $k$ is replaced by a weight $k'$ that is $p$-adically very close to $k$, one can find an eigenform $f'$ whose Hecke eigensystem is highly congruent to that of $f$: this can be deduced by the much stronger statement that such eigenforms live in $p$-adic families over the rigid analytic eigencurve. On the other hand, if the weight $k$ is fixed, there is no known reason to expect any congruences between the Hecke eigensystems of two $p$-new eigenforms modulo a high power of $p$. In this work, we gather computational evidence towards the fact that such ``deep congruences'' actually occur, in a systematic way and modulo powers of $p$ that increase with the weight: namely, most $p$-new eigenforms admit another eigenform to which they are deeply congruent, and the depth can be controlled in terms of the invariants of the form. The fact that such congruences are ubiquitous is striking evidence for the existence of a hidden structure justifying them, though we do not know at the moment what this could be. Given the long line of research stemming from Mazur's seminal paper \cite{mazureis} around modulo $p$ congruences between Eisenstein series and cuspidal newforms, one might expect the theory behind higher congruences to be quite deep.

We present our work in more detail. Recall that, for a $p$-new eigenform $f$, all the information about the $U_p$-eigenvalue $a_p=\pm p^{\frac{k-2}{2}}$ is contained in the Atkin--Lehner sign $\vareps$. In particular, it is unreasonable to expect $a_p$ itself to contain much information about $p$-adic congruences among Hecke eigensystems, in contrast to the case of the $p$-adic variation of Hecke eigensystems along the eigencurve, which is essentially governed by the variation of $k$ and $a_p$. A finer invariant in the $p$-new setting is the \emph{$\cL$-invariant} $\cL(f)$, as defined in various equivalent ways by Mazur--Tate--Teitelbaum \cite{mtt}, Greenberg--Stevens \cite{greensteve}, and Mazur via Fontaine's theory \cite{mazurinv}. In particular, the knowledge of $k,\vareps$ and $\cL(f)$ amounts to that of the local at $p$, $p$-adic Galois representation attached to $f$: it is the semistable representation  $V_{k,\vareps,\cL(f)}$ attached to these data. While it is expected that keeping $k$ and $\vareps$ fixed and slightly perturbing $\cL(f)$ would provide us with highly congruent local Galois representations, the striking result of our computation is that the valuation of $\cL(f)$ alone determines how deeply the Hecke eigensystem of $f$ (a global object) is congruent to that of a ``twin'' form with \emph{opposite} Atkin--Lehner sign. We summarize our findings in the following conjecture. We say that an $\cL$-invariant $\cL$ is \emph{admissible} if the valuation $v_p(\cL)$ is smaller than the negative constant
\begin{equation*} -C_{p,k}=-\lfloor\log_p\left(\frac{k-2}{p-1}\right)\rfloor-5 \end{equation*}
coming from the work of Jiawei An \cite{jiawei} (see \Cref{sec:jiawei}). We say that a $p$-newform $f$ is admissible if $\cL(f)$ is. According to Bergdall and Pollack's conjecture \cite[Section 8.2]{bplinv}, proved by An \cite[Theorem 1.9]{jiawei} conditionally on the ghost conjecture, one expects the valuations of the $\cL$-invariants to be equidistributed over the interval $[-k(p-1)/(2(p+1)),0]$ as $k$ goes to $\infty$. In particular, one expects only $O(\log_p(k))$ eigenforms to be non-admissible in weight $k$.

We make the recurring assumption that $p$ splits completely in the Hecke field $K_f$. This is mostly for computational purposes, and simplifies the discussion of congruences, but we expect congruences to appear independently of the behaviour of $p$ in $K_f$.

\begin{customconj}{1}\label{globintro}
Let $f$ be an admissible newform of weight $k> 2$ and level $\Gamma_0(p)$, such that $p$ splits completely in $K_f$. 
Then there exists a unique newform $g$ of weight $k$ and level $\Gamma_0(p)$ such that 
\begin{enumerate}
\item $\vareps(g)=-\vareps(f)$;
\item $v_p(\cL(f)+\cL(g))\ge -C_{p,k}$ (in particular, $v_p(\cL(f))=v_p(\cL(g))$); 
\item the Hecke eigensystems of $f$ and $g$ are congruent modulo $p^{-v_p(\cL(f))+1}$.
\end{enumerate}
\end{customconj}

We gathered a reasonably large amount of data in support of \Cref{globintro}: we computed up to weight 200 for $p=3$, 150 for $p=5$, and 100 for $p=7$ and 11. We have some data in tame level larger than 1 that also goes in a similar direction, but since it is not as extensive as that in tame level 1, we did not include it in the formulation of our conjecture. In particular, it is unclear whether one could expect uniqueness of the eigenform $g$ for higher tame level, since repeated $\cL$-invariant valuations for the same Atkin--Lehner sign do not appear in our data.

In view of the aforementioned equidistribution conjecture of Bergdall--Pollack, and result of An, (iii) of \Cref{globintro} predicts that we can produce congruences modulo powers of $p$ that increase linearly in $k$, and (ii) implies that there is typically quite a lot of cancellation happening when taking the sum of the two $\cL$-invariants. The actual cancellation we detect in the computation is actually always strictly larger than $-C_{p,k}$, so the constant does not seem to be optimal; we have however much less data on cancellations of sums of $\cL$-invariants than on congruences of newforms. For instance, for $p=5$ and $k=32$, we have two forms with $\cL$-invariant valuation $-11$, that are congruent modulo $p^{12}$, and for which the sum of the $\cL$-invariants has valuation $-2$. We present in \Cref{sec:data} some samples of the data we computed, and refer to \cite{deepcong} for more of it. 

We gather in \Cref{sec:theo} some theoretical evidence towards Conjecture \ref{globintro}. Among such evidence, one can extract from \cite{jiawei} the fact that $p$-new eigenforms appear in pairs $f,g$ satisfying $v_p(\cL(f))=v_p(\cL(g))$ (see \Cref{thm:an}). The existence of such pairs is deduced from reading the valuations off of Bergdall and Pollack's ghost series for the $U_p$-operator, via the so-called phenomenon of ``ghost duality''. 

We deduce some modulo $p$ congruence in small weight from the work of Breuil--Mézard (see \Cref{prop:bremez}), which has the interesting feature of making the Atkin--Lehner sign appear in the description of the modulo $p$ local Galois representation, giving some support to property $(i)$ of Conjecture \ref{globintro}. The fact that the forms in the congruent pairs have opposite signs would also be explained in small weight by a conjecture of Calegari--Stein (\Cref{conj:CS}) and is compatible with a result of Rozensztaijn (\Cref{prop:rozen}) and with known equidistribution results for the Atkin--Lehner sign (see for example \cite[Corollary 2.14]{HLS} for square-free level).

Since the datum of $k,\vareps,\cL$ determines a local Galois representation $V_{k,\vareps,\cL}$, one can try to get a more accessible statement than Conjecture \ref{globintro} by replacing global congruences of Hecke eigensystems with local congruences of Galois representations. In \Cref{sec:repcong}, we recall and compare different kinds of congruence for group representations, the simplest one being trace-congruence. We encode such an expectation in the following conjecture. 
Let $\cL,\cL^\prime\in\Q_p$ and 
let $h\ge 1$ be an integer.

\begin{customconj}{2}\label{locintro}
If $\cL,\cL'$ are admissible and $v_p(\cL+\cL^\prime)\ge -C_{p,k}$, then $V_{k,\cL,1}$ and $V_{k,\cL^\prime,-1}$ are congruent modulo $p^{-v_p(\cL)+1}$.
\end{customconj}

We do not know at the moment how to prove any case of \Cref{locintro}. We remark in \Cref{sec:locglob} that, for $p<11$, the conjecture would imply Conjecture \ref{globintro}, since for such a small prime a $p$-adic representation of $\Gal(\ovl\Q/\Q)$, unramified away from $p\infty$, is uniquely determined by its restriction to a decomposition group at $p$ by a result of Chenevier \cite{chenquelques}.

We include a discussion of congruences between semistable representations with the same Atkin--Lehner sign. Even if this does not give any evidence towards \Cref{locintro}, we think it might serve as an interesting comparison. Semistable representations of a fixed weight $k$ and Atkin--Lehner sign $\vareps$ are parameterized by the $\cL$-invariant, and form in a suitable sense a family over $\bP^1(\Qp)$. In particular, the reduction of the representations in the family modulo a power of $p$ is locally constant, and can be deduced from knowledge of the domains on which the mod $p$ reduction is constant via results of the first-named author with Torti \cite{contor}. When $2<k<p$, such domains admit a very simple description thanks to the results of Breuil--Mézard \cite{bremez} and Guerberoff--Park \cite{gueparht}, and we can determine explicit radii of mod $p^n$ congruence. Unfortunately, there is no way to go between the two projective lines while controlling the mod $p^n$ reduction, so that it is not clear whether a same-sign congruence gives any information towards \Cref{locintro}.

The last part of the paper is devoted to a short discussion of our computations in SageMath and Magma, and presenting samples of the extensive data we have in support of Conjecture \ref{globintro}.


\smallskip

\noindent\textbf{Conventions.} For every field $K$, we denote by $\ovl K$ an algebraic closure of $K$ and by $G_K$ the absolute Galois group $\Gal(\ovl K/K)$.

\noindent For every prime $p$, we fix a field embedding $\ovl\Q\into\Qp$, identifying $G_{\Q_p}$ with a decomposition subgroup of $G_\Q$ at $p$. We choose a $p$-adic valuation on $\Qp$ such that $v_p(p)=1$.
We denote by $\chi_\cyc$ the $p$-adic cyclotomic character of $G_\Q$, as well as its restrictions to subgroups of $G_\Q$.

\smallskip

\noindent\textbf{Acknowledgments.} Both authors acknowledge support from the Deutsche Forschungsgemeinschaft - Project numbers 444845124 (first author) and 491064713, 546550122 (second author). \\ 
We wish to thank Jiawei An, John Bergdall, Gebhard B\"ockle, Alexandre Maksoud, Anna Medvedovsky, Rob Pollack and Gabor Wiese for various helpful exchanges and for their comments on a first draft of the paper. Our computations were carried out on the computing servers of Boston University, Bonn University and Heidelberg University. We thank these institutions for the use of their machines.

\tableofcontents

\section{Conjectural deep congruences between newforms}

The goal of this section is to state the ``global'' Conjecture \ref{globconj} about congruences between Hecke eigensystems of newforms, or equivalently between the traces of global $p$-adic Galois representations attached to newforms, and the ``local'' Conjecture \ref{locconj} about congruences of $p$-adic representations of $G_{\Q_p}$, semistable at $p$. We explain how the local conjecture implies the global one for small values of $p$, thanks to a result of Chenevier.

Throughout this section, we fix an integer weight $k> 2$ and a prime $p$. 

\subsection{Congruences of Hecke eigensystems and Galois representations}\label{sec:repcong}

We start with some definitions and facts about congruences between either representations of profinite groups or their traces. 
Let $\calH$ be the abstract Hecke algebra over $\Z$, spherical outside $p$ and of level $\Gamma_0(p)$ at $p$: it is generated by the operators $T_\ell$ at the primes $\ell\neq p$, and the operator $U_p$. Let $\calH^p$ be the spherical factor of $\calH$.

\begin{defin}
	A ($\Z_p$-valued) \emph{spherical Hecke eigensystem} is a ring homomorphism $\calH^p\to\Z_p$.
	For $n\in\Z$, $n\ge 1$, we say that two spherical Hecke eigensystems $\alpha,\beta$ are congruent modulo $p^n$ if $v_p(\alpha(T)-\beta(T))\ge n$ for every $T\in\calH^p$.
\end{defin}

\begin{rem}
Two Hecke eigensystems $\alpha,\beta$ are congruent modulo $p^n$ if and only if $v_p(\alpha(T_\ell)-\beta(T_\ell))\ge n$ for every $\ell\ne p$. 
\end{rem}

Let $G$ be a profinite group, and let $\rho,\rho^\prime\colon G\to\GL_2(\Q_p)$ be two continuous representations. Let $n\in\Z, n\ge 1$. 

\begin{defin}
We say that $\rho,\rho^\prime$ are \emph{trace-congruent modulo $p^n$} if, for every $g\in G$, $v_p(\Tr(\rho(g))-\Tr(\rho^\prime(g)))\ge n$.
\end{defin}

A standard argument shows that $\Q_p^2$, equipped with any continuous action of $G$, admits a $G$-stable $\Z_p$-lattice. The modulo $p$ reduction of such a lattice, considered as a $\Z/p$-linear representation of $G$, is independent of the lattice only up to semisimplification. Along the lines of \cite[Definition 1.1]{bellconv}, we give the following.

\begin{defin}
Two continuous representations $\rho,\rho^\prime\colon G\to\GL_2(\Q_p)$ are \emph{physically congruent modulo $p^n$} if there exist lattices for $\rho$ and $\rho^\prime$ whose reductions modulo $p^n$ give isomorphic $\Z/p^n$-linear representations of $G$. 

Two continuous representations $\rho,\rho^\prime\colon G\to\GL_2(\Z_p)$ are \emph{physically congruent modulo $p^n$} if their reductions modulo $p^n$ give isomorphic $\Z/p^n$-linear representations of $G$. 
\end{defin}

In order to make some later statements simpler, we do not allow further conjugation if $\rho,\rho^\prime$ are taken with coefficients in $\Z_p$ to start with. 

By a result of Carayol \cite[Théorème 1]{cartrace}, if $\ovl\rho$ and $\ovl\rho^\prime$ are absolutely irreducible, then $\rho$ and $\rho^\prime$ are physically congruent modulo $p^n$ if and only if they are trace-congruent modulo $p^n$. 

Throughout this article, we denote by $\cS_k(\Gamma_0(p))$ the space of cusp forms of weight $k$ and level $\Gamma_0(p)$ and by $\cS_k(\Gamma_0(p))^\mathrm{new}\subset \cS_k(\Gamma_0(p))$ the subspace of newforms. In the sequel, by a \emph{newform} we mean a normalized Hecke eigenform in $\cS_k(\Gamma_0(p))^\mathrm{new}$. 
For every such newform $f$, we denote by $K_f$ its coefficient field, by $\vareps(f)\in\{\pm 1\}$ its Atkin--Lehner sign, and by $\cL(f)\in K_{f,\mathfrak{p}}$ its $p$-adic $\cL$-invariant (see Section \ref{sec:Linv} for more details). Here, $K_{f,\mathfrak{p}}$ denotes the completion of $K_f$ and $\mathfrak{p}\mid p$. We can read $\vareps(f)$ off the $U_p$-eigenvalue $a_p$ of $f$, since $a_p=-\vareps(f)p^{\frac{k-2}{2}}$. 

Assume that $p$ splits completely in $K_f$. We write $\alpha(f)\colon\calH^p\to\Z_p$ for the system of spherical Hecke eigenvalues of $f$, and $\rho_{f,p}\colon G_\Q\to\GL_2(\Q_p)$ for the $p$-adic Galois representation attached to $f$. Since the traces of $\rho_{f,p}$ at unramified places are given by the spherical Hecke eigenvalues of $f$, the following remark is immediate.

\begin{rem}\label{hecketogal}
Let $n\in\Z, n\ge 1$. Let $f,g$ be two newforms of weight $k$ and level $\Gamma_0(p)$, such that $p$ splits completely in the respective Hecke fields. Then $\alpha(f)$ is congruent to $\alpha(g)$ modulo $p^n$ if and only if $\rho_{f,p}$ and $\rho_{g,p}$ are trace-congruent modulo $p^n$.
\end{rem}

\subsection{Reminders on $\cL$-invariants}
\label{sec:Linv}
We start by recalling how semistable, non-crystalline $p$-adic representations of $G_{\Q_p}$ are classified by their weight, $\cL$-invariant and Atkin--Lehner sign, as summarized for instance in \cite[Exemple 3.1.2.2]{bremez}. Throughout this section, let $k> 2$ be an integer. 

We denote by $\bP^1$ the rigid analytic projective line over $\Q_p$, and by $\cL$ a variable on it. 
Let $\varpi$ be a square root of $p$ in $\Qp$. Let $\vareps\in\{\pm 1\}$, and $\cL\in\bP^1(\Qp)=\Qp\cup\{\infty\}$.
We denote by $V_{k,\mathcal{L},\vareps}$ the dual of the semistable representation of $G_{\Q_p}$ whose associated $(\varphi,N)$-module is a 2-dimensional $\Qp$-vector space $D_{k,\cL,\vareps}$ equipped with the structures defined, in a basis $(e_1,e_2)$, by
\[ \varphi=\begin{pmatrix}\vareps\varpi^{k-2} & 0 \\ 0 & \vareps\varpi^{k-2}\end{pmatrix},\quad N=\begin{pmatrix}0 & 0 \\ 1 & 0\end{pmatrix}, \quad \Fil^iD_{k,\cL,\vareps}=\begin{cases}D_{k,\cL,\vareps}\text{ if }i\le 0 \\ \Qp(e_1+\cL e_2)\text{ if }1\le i\le k-1 \\ 0\text{ if }i\ge k\end{cases} \]
if $\cL\in\Qp$, and by
\[ \varphi=\begin{pmatrix}\varpi^{k-2} & 0 \\ 0 & \varpi^{k-2}\end{pmatrix},\quad N=0, \quad \Fil^iD_{k,\cL,\vareps}=\begin{cases}D_{k,\cL,a}\text{ if }i\le 0 \\ \Qp(e_1+e_2)\text{ if }1\le i\le k-1 \\ 0\text{ if }i\ge k\end{cases} \]
if $\cL=\infty$. 
The above description for $V_{k,\infty}$ can be obtained by rewriting $V_{k,\cL}$, $\cL\in\Qp$, in the basis $(e_1,\cL e_2)$, and letting $\cL\to\infty$. 
For $\cL\ne\infty$, the representations $V_{k,\cL,\vareps}$ are semistable and non-crystalline of Hodge--Tate weights $(0,k-1)$. By a standard calculation in Fontaine's theory, every semistable, non crystalline representation of $G_{\Q_p}$ of Hodge--Tate weights $(0,k-1)$ is, up to twist with a semistable character of $G_{\Q_p}$, isomorphic to $V_{k,\cL,\vareps}$ for a unique $\cL\in\Qp$. On the other hand, $V_{k,\infty,\vareps}$ is crystalline.


\subsection{Doubling of $\cL$-invariants}\label{sec:jiawei}

For a prime $p$ and a positive integer $k$, set
\begin{equation}\label{Cpk} C_{p,k}=\lfloor\log_p\left(\frac{k-2}{p-1}\right)\rfloor+5. \end{equation}
Note that we are assuming $k>2$, so the logarithm is well defined. We introduce the following terminology in order to simplify the exposition throughout the paper.

\begin{defin}
We call an $\cL$-invariant $\cL$ \emph{admissible} if $v_p(\cL) < -C_{p,k}$. We call a newform $f\in\cS_k(\Gamma_0(p))^\mathrm{new}$ admissible if $\cL(f)$ is admissible.
\end{defin}

According to Bergdall and Pollack's conjecture \cite[Section 8.2]{bplinv}, proved by An in \cite[Theorem 1.9]{jiawei} conditionally on the ghost conjecture, one expects the valuations of the $\cL$-invariants to be equidistributed over the interval $[-k(p-1)/(2(p+1)),0]$ as $k$ goes to $\infty$. In particular, one expects only $O(\log_p(k))$ eigenforms to be non-admissible in weight $k$.

We record another result of An. We came to learn of the following formulation from a talk of Rob Pollack, and we thank him for explaining to us how to extract it from \cite{jiawei}. 
Even though it was proved after we had already formulated some of our conjectures based on computational evidence, and provided some theoretical evidence for them, we prefer to present it first since this will help streamline the exposition of our work. 

\begin{thm}[{\cite{jiawei}}]\label{thm:an}
Let $p\ge 11$, and let $f$ be an admissible newform of level $\Gamma_0(p)$ and weight $k$. Assume that $\ovl\rho_f\vert_{I_{\Q_p}}$ is locally reducible and strongly generic in the sense of \cite[Definition 1.5]{jiawei}, i.e. isomorphic to a twist of
    \[ \begin{pmatrix} \ovl\chi_\cyc^{a+1} & \ast \\ 0 & 1\end{pmatrix}, \]
    with $2\le a \le p-5$.
Then there exists a newform $g\ne f$ of level $\Gamma_0(p)$ and weight $k$ such that:
\begin{itemize}
\item $\ovl\rho_f\cong\ovl\rho_g$;
\item $v_p(\cL(f))=v_p(\cL(g))$.
\end{itemize}
\end{thm}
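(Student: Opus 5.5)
The plan is to read the statement off An's results on the ghost conjecture, using the ``ghost duality'' phenomenon of Bergdall--Pollack mentioned in the introduction.

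\emph{Step 1: reduce to slopes.} Fix the residual representation $\ovl\rho=\ovl\rho_f$. Under the stated hypotheses ($p\ge 11$, $\ovl\rho|_{I_{\Q_p}}$ reducible and strongly generic with $2\le a\le p-5$), An proves the local ghost conjecture. Hence the $U_p$-slopes on the $\ovl\rho$-part of the space of overconvergent forms are computed by the Newton polygon of the ghost series $G_{\ovl\rho}(w,t)=1+\sum_n g_n(w)t^n$. Each $g_n(w)$ is a product of factors $(w-w_{k'})^{m_n(k')}$ with explicit multiplicities. The $p$-new forms of weight $k$ correspond to the slopes $\frac{k-2}{2}$ in the middle of the window between $d_k^{\mathrm{ur}}$ and $d_k^{\mathrm{Iw}}-d_k^{\mathrm{ur}}$. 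Following \cite{jiawei}, the $\cL$-invariants are read off the next-order behaviour: the derivative of the characteristic power series in $w$ at $w_k$, or equivalently the slopes at weights $k'$ $p$-adically close to $k$. Concretely, An expresses $v_p(\cL(f))$ for each $p$-new eigenform through a ``near-Steinberg'' halo of ghost zeros around $w_k$. In his Theorem 1.9 he obtains the multiset $\{v_p(\cL)\}$ as $-(\text{explicit combinatorial quantities }\Delta_{k,i})$, up to an error bounded by $C_{p,k}$. Admissibility, i.e.\ $v_p(\cL(f))<-C_{p,k}$, guarantees that this error does not blur the valuation.

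\emph{Step 2: ghost duality.} The ghost multiplicities satisfy the symmetry $m_n(k)=m_{d^{\mathrm{Iw}}_k-n}(k)$ around the midpoint $\frac12 d_k^{\mathrm{Iw}}$. This makes the new-slope block symmetric: the quantities $\Delta_{k,i}$ governing the $\cL$-valuations occur in matching pairs $i\leftrightarrow d_k^{\mathrm{new}}+1-i$ with equal values. For an admissible $f$, its valuation $v_p(\cL(f))=-\Delta_{k,i}$ therefore equals $-\Delta_{k,i'}$ for the dual index $i'\neq i$. Since the estimate is sharp when the valuation is below $-C_{p,k}$, the dual index gives a newform $g$ in the same $\ovl\rho$-component with $v_p(\cL(g))=v_p(\cL(f))$. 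Being in the same component gives $\ovl\rho_g\cong\ovl\rho_f$.

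\emph{Main obstacle.} The difficulty is showing that $i'\neq i$, i.e.\ that the middle index of an odd-length block is never admissible, and that distinct indices really give distinct eigenforms. For this I would check that the central $\Delta$ is $O(\log_p k)$, and so bounded by $C_{p,k}$. I would also check that the eigenforms are counted with multiplicity via the Newton polygon, so two equal admissible values force two eigenforms. Extracting exactly this constant from An's error terms is the delicate part.
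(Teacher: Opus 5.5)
Your outline follows the same route as the paper: An's comparison of $v_p(\cL)$ with ghost-series data, a doubling coming from ghost duality, and a bound showing that everything outside the doubled pattern is non-admissible. However, the pairing step as you justify it does not work. The symmetry $m_n(k)=m_{d_k^{\mathrm{Iw}}-n}(k)$ is just the tent shape of the ghost multiplicities at the single weight $k$, and it is built into their definition. It says nothing about the valuations $v_p(g_{n,\hat k}(w_k))$ from which the points of $\underline{\Delta}_k$ (hence the $\cL$-valuations) are built. Those valuations are governed by the other ghost zeros $w_{k'}$, $k'\neq k$, whose multiplicities are not symmetric about $\frac12 d_k^{\mathrm{Iw}}$. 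What makes the polygon symmetric is the genuine ghost duality from the work of Liu--Truong--Xiao--Zhao on the local ghost conjecture, namely $v_p(g_{d_k^{\mathrm{Iw}}-n,\hat k}(w_k))-v_p(g_{n,\hat k}(w_k))=\frac{k-2}{2}(d_k^{\mathrm{Iw}}-2n)$. This is a nontrivial identity involving all the ghost zeros. The paper does not rederive it: it imports its consequence from \cite[Corollary 3.25(1)]{jiawei}, where the $k$-thresholds are identified with the slopes of $\underline{\Delta}^+_k$ and shown to come in pairs.

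The second gap is the one you flag yourself, but your diagnosis of it is off. An's input is not a single estimate ``up to an error bounded by $C_{p,k}$''. An additive error would only make $v_p(\cL(f))$ and $v_p(\cL(g))$ close, never equal, and \cite[Theorem 1.9]{jiawei} is the equidistribution statement. What the paper uses is a chain of exact coincidences of multisets:
\begin{itemize}
\item $v_p(\cL)$ versus slopes of the matrix $A_1$ \cite[Theorem 1.17]{jiawei};
\item slopes of $A_1$ versus $k$-thresholds \cite[Theorem 1.18]{jiawei};
\item $k$-thresholds versus slopes of $\underline{\Delta}^+_k$ \cite[Corollary 3.25(1)]{jiawei}.
\end{itemize}
These hold once one discards the forms with $v_p(\cL)\ge -R-1$ \cite[Theorem 5.32(2)]{jiawei}. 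So the forms escaping the doubled pattern are (at most) those in this whole range, not a specific central index; in a symmetric polygon the central index has slope $0$ and is harmless anyway. The constant comparison you leave open is then $R<M(k)+1$ \cite[Remark 4.22(3)]{jiawei} together with $M(k)<\lfloor\log_p(\frac{k-k_0}{p-1})\rfloor+3$ \cite[Lemma 3.17]{jiawei}, which gives $-R-1>-C_{p,k}$; this is exactly why $C_{p,k}$ is defined as it is. With this in place, every admissible value occurs an even number of times among the newforms of the $\ovl\rho$-localized space, each counted once by multiplicity one. The existence of $g\neq f$ with $\ovl\rho_g\cong\ovl\rho_f$ and $v_p(\cL(g))=v_p(\cL(f))$ is then immediate.
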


We briefly explain how to extract the result and the constant $C_{p,k}$ from \cite{jiawei}. In the paper, An compares four different invariants: the valuation of the $\cL$-invariants, the slopes of the matrix $A_1$, the $k$-thresholds, and the slopes of the polygon $\underline{\Delta}^+_k$. These invariants all coincide after removing those that are non-admissible (larger than a constant $-C_{p,k}$: \cite[Corollary 3.25(1)]{jiawei} compares the slopes of $\underline{\Delta}^+_k$ (denoted there by $s_j$) with the $k$-thresholds, \cite[Theorem 1.17]{jiawei} compares the valuations of the $\cL$-invariants with the slopes of $A_1$, and \cite[Theorem 1.18]{jiawei} compares the slopes of $A_1$ with the $k$-thresholds.  It is shown in \cite[Corollary 3.25(1)]{jiawei} that the $k$-thresholds (hence the valuation of the $\cL$-invariants) come in pairs as given in \Cref{thm:an}: this is a consequence of the ``ghost duality'' that gives the corresponding doubling of the slopes of  $\underline{\Delta}^+_k$.

The forms that do not fit in the pattern of ``doubled'' $\cL$-invariants are (at most) those for which the $\cL$-invariants are larger than the constant $-R-1$ from \cite[Theorem 5.32(2)]{jiawei}. Now by \cite[Remark 4.22(3)]{jiawei}, $R<M(k)+1$, and by \cite[Lemma 3.17]{jiawei}, $M(k)<\lfloor\log{\left(\frac{k-k_0}{p-1}\right)}\rfloor+3$, where $k_0$ is the unique integer in $\{2,\ldots,p\}$ congruent to $k$ modulo $p-1$. We deduce that
\[ -R-1>-M(k)-2>-\lfloor\log{\left(\frac{k-k_0}{p-1}\right)}\rfloor-5=-\lfloor\log{\left(\frac{k-2}{p-1}\right)}\rfloor-5. \]


\subsection{A global conjecture}

We make the following assumption on any newform $f\in\cS_k(\Gamma_0(p))^\mathrm{new}$ appearing in the rest of the paper.

\begin{assumpt}
\label{main-ass}
The prime $p$ splits completely in the Hecke field $K_f$ of $f$.
\end{assumpt}

We refer the reader to Remark \ref{rem:main-ass} for a discussion of Assumption \ref{main-ass}. 
%
%
Based on the data we computed, we formulate the following conjecture. 



\begin{conj}\label{globconj}
Let $f$ be an admissible newform of weight $k> 2$ and level $\Gamma_0(p)$, such that $p$ splits completely in $K_f$. 
Then there exists a unique 
newform $g$ of weight $k$ and level $\Gamma_0(p)$ such that 
\begin{enumerate}
\item $\vareps(g)=-\vareps(f)$;
\item $v_p(\cL(f)+\cL(g))\ge -C_{p,k}$ (in particular, $v_p(\cL(f))=v_p(\cL(g))$); 
\item\label{ppower} the Hecke eigensystems of $f$ and $g$ are congruent modulo $p^{-v_p(\cL(f))+1}$.
\end{enumerate}
\end{conj}

\noindent Even if we do not make this into a formal definition, it will sometimes be convenient to say that two forms $f,g$ satisfying point \ref{ppower} of Conjecture \ref{globconj} are \emph{deeply congruent}.

\begin{rem}\mbox{ }\label{rem:glob}
\begin{enumerate}
\item Since $f$ is admissible, condition $(ii)$ implies that $v_p(\cL(f)+\cL(g))>v_p(\cL(f))$, hence $v_p(\cL(f))=v_p(\cL(g))$ and some cancellation happens when computing the sum $v_p(\cL(f)+\cL(g))$. By our definition of admissibility, $C_{p,k}$ is the largest constant for which this is true, which partly justifies its choice. In the examples we compute, however, $v_p(\cL(f)+\cL(g))$ is always strictly larger than $-C_{p,k}$ (see \Cref{sec:cancellation}). As remarked earlier, Bergdall and Pollack conjecture that the valuations of $\cL$-invariants are equidistributed over the interval $[-k(p-1)/(2(p+1)),0]$ as $k$ goes to $\infty$ \cite[Section 8.2]{bplinv}. This has been proved in \cite[Theorem 1.9]{jiawei} conditionally on the ghost conjecture. In particular, (iii) of \Cref{globconj} predicts that we can produce congruences modulo powers of $p$ that increase linearly in $k$, and condition (ii) implies that there is typically quite a lot of cancellation happening when taking the sum of the two $\cL$-invariants. 
\item Via Remark \ref{hecketogal}, we can replace condition $(iii)$ in Conjecture \ref{globconj} with the following: $\rho_{f,p}$ and $\rho_{g,p}$ are trace-congruent modulo $p^{v_p(\cL)}$. This remark inspires us to investigate the weaker condition of the local Galois representations at $p$ being congruent: the advantage in this is that such representations admit a simple parameterization, that we recall in the next section.
\item\label{glob2} If we remove condition $(iii)$, then a form $g$ satisfying $(i,ii)$ is not unique any more, as one sees for instance from Table \ref{11-18}.
\item As exemplified again by the data in Section \ref{sec:data}, the exponent of $p$ in Conjecture \ref{globconj}\ref{ppower} is not optimal for every $f$, and can sometimes be increased to $-v_p(\cL(f))+i$ with $i=2$ or 3. We do not have any conjecture on what the precise value should be. Ideally, one would hope to describe the precise depth in terms of $v_p(\cL(f)+\cL(g))-v_p(\cL)$, i.e. the amount of cancellation happening when taking the sum of the $\cL$-invariant. However, as seen from the data of \Cref{sec:cancellation}, there is no clear relation between the two quantities.
\item The data we gathered in level $\Gamma_1(N)\cap\Gamma_0(p)$, with $N>1$, seems to suggest a similar behaviour to what we outline in Conjecture \ref{globconj}. However, we do not feel we have sufficient data to make a more general statement: for instance, for $N>1$ we do not know whether one could expect uniqueness of the eigenform $g$, since repeated $\cL$-invariant valuations for the same Atkin--Lehner sign do not appear in our data.
\end{enumerate}
\end{rem}

\subsection{Theoretical evidence}\label{sec:theo}

We gather some theoretical evidence towards Conjecture \ref{globconj}.



\subsubsection{Evidence for same $\cL$-invariant valuation} The first piece of evidence is the result of J. An on the doubling of $\cL$-invariants, that we already presented in \Cref{thm:an}. A newform satisfying the assumptions of Theorem \ref{thm:an} admits at least one ``twin'' with the same valuation of the $\cL$-invariant and the same congruence class mod $p$, but An's technique does not seem to give information on why the Atkin--Lehner signs of the two forms should be different, or why $f$ would admit exactly one such twin form deeply congruent to it. Similarly to what we observed in Remark \ref{rem:glob}\ref{glob2}, the data shows that a form $g$ satisfying the conclusion of Theorem \ref{thm:an} is not necessarily deeply congruent to $f$.

\subsubsection{Evidence for opposite Atkin--Lehner signs}

For small weights, we deduce from the calculations in \cite{bremez} that pairs of deeply congruent forms necessarily have opposite Atkin--Lehner signs.

Let $\cL,\cL'\in\Qp$ 
and $\vareps,\vareps'\in\{\pm 1\}$. The following is obtained by simply observing case $(iii)$ of \cite[Théorème 1.2]{bremez}.

\begin{prop}\label{prop:bremez}
Assume that 
\begin{enumerate}[label=(\roman*)]
\item $k$ is an even integer with $2<k<p-1$, 
\item $v_p(\cL)\ge -\frac{k}{2}+2$, 
\item $v_p(\cL)=v_p(\cL')$.
\end{enumerate}
Then any two of the following three conditions imply the remaining one:
\begin{enumerate}[label=(\arabic*)]
    \item $\ovl V_{k,\cL,\vareps}\cong\ovl V_{k,\cL',\vareps'}$,
    \item $v_p(\cL+\cL')-v_p(\cL)\ge 1$,
    \item $\vareps=-\vareps'$.
\end{enumerate}
\end{prop}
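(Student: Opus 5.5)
The plan is to read off the explicit mod $p$ reductions from \cite[Théorème 1.2]{bremez} and turn the three conditions into one multiplicative identity among residue-field elements. By hypotheses (i) and (ii), $k$ is even with $2<k<p-1$ and $v_p(\cL)\ge 2-\frac k2$. By (iii), the same bound holds for $\cL'$. So both $V_{k,\cL,\vareps}$ and $V_{k,\cL',\vareps'}$ lie in the range of case (iii) of that theorem. I expect the theorem there to give a semisimple reducible reduction of the shape
\[ \ovl V_{k,\cL,\vareps}\cong \mu_{\lambda}\,\omega^{k/2}\oplus\mu_{\lambda^{-1}}\,\omega^{k/2-1}, \]
up to the duality and normalisation conventions of \Cref{sec:Linv}, which I will need to match. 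Here $\omega$ is the mod $p$ cyclotomic character and $\mu_\lambda$ is the unramified character sending Frobenius to $\lambda\in\ovl{\mathbb F}_p^\times$. The key feature is that $\lambda$ should factor as $\lambda=\vareps\cdot u(\cL)$. The factor $u(\cL)$ is the reduction of a $p$-adic unit that depends on $\cL$ only through its leading term, something like an explicit constant times $p^{-v_p(\cL)}\cL$ when $v_p(\cL)$ sits at the boundary $2-\frac k2$. In every case $u$ should be odd, $u(-\cL)=-u(\cL)$, and should depend only on $\cL$ modulo $p^{v_p(\cL)+1}$.

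First I will check that $\ovl V_{k,\cL,\vareps}$ determines $\lambda$. Since $k<p-1$, the characters $\omega^{k/2}$ and $\omega^{k/2-1}$ are distinct, as is every pair of them twisted by unramified characters. Comparing the two summands restricted to inertia therefore shows that condition (1) is equivalent to $\vareps\,u(\cL)=\vareps'\,u(\cL')$.

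Second, since $v_p(\cL)=v_p(\cL')$, condition (2) says exactly that the leading terms of $\cL$ and $-\cL'$ agree. By the properties of $u$ above, (2) is equivalent to $u(\cL')=-u(\cL)$.

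Third, condition (3) is the identity $\vareps'=-\vareps$. The proof then closes by trivial sign bookkeeping in the equation $\vareps u=\vareps' u'$, with $u,u'\neq0$. If (2) and (3) hold, both sides equal $\vareps u$, giving (1). If (1) and (3) hold, then $u'=-u$, giving (2). If (1) and (2) hold, then $\vareps=-\vareps'$, giving (3). This uses $p$ odd, so that $u\neq -u$.

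The main obstacle is the first step: extracting from \cite{bremez} the precise formula for $\lambda$ in case (iii) and confirming that it really has the form $\vareps\cdot(\text{odd function of the leading term of }\cL)$. This must hold uniformly on the whole range $v_p(\cL)\ge 2-\frac k2$. In particular I must check that in the strict inequality range $\lambda$ still depends on the leading term of $\cL$ rather than only on $\vareps$, since otherwise (2) would be decoupled from (1). I must also reconcile Breuil--Mézard's conventions (their $\varphi$ eigenvalue, their $\cL$ normalisation, and covariant versus contravariant functors) with the dual convention defining $V_{k,\cL,\vareps}$ here. Twisting by a character or dualising preserves the structure of the argument, so I would first compute $\lambda$ directly from their statement, then verify the sign behaviour under $\cL\mapsto-\cL$ and $\vareps\mapsto-\vareps$.
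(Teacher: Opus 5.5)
\textbf{Genuine gap.} Your route is the paper's route. The paper gives no argument beyond pointing to case (iii) of \cite[Th\'eor\`eme 1.2]{bremez}, and your final sign bookkeeping is fine. Note that for ``(2) $\Leftrightarrow u(\cL')=-u(\cL)$'' you need $u$ to be \emph{injective} in the leading term, not just odd.

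The $\vareps$-factor in $\lambda$ actually comes for free. Twisting by the unramified quadratic character $\eta$ multiplies $\varphi$ by $-1$ and leaves $N$ and $\Fil$, hence $\cL$, unchanged. So $V_{k,\cL,-\vareps}\cong V_{k,\cL,\vareps}\otimes\eta$, and therefore $\lambda(\cL,-\vareps)=-\lambda(\cL,\vareps)$. Everything then rests on the one step you defer: that for all $v_p(\cL)\ge 2-\frac k2$ the reduction is reducible with unramified parameter an odd, injective function of $\ovl{p^{-v_p(\cL)}\cL}$.

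That step cannot be carried out on the whole range. The semisimplified reduction of $V_{k,\cL,\vareps}$ is locally constant in $\cL\in\Qp$; the paper itself relies on this in \Cref{secsamesign}. In particular it is constant on some disc $v_p(\cL)>N$ around $\cL=0$. Take $\cL$ in that disc, $\cL'=-\cL$ and $\vareps'=\vareps$. Then (1) and (2) hold but (3) fails. So no formula of your shape exists for large $v_p(\cL)$, and the statement as written needs an upper bound on $v_p(\cL)$.

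The same problem arises wherever the reduction is irreducible, if such $\cL$ occur in the range. An irreducible reduction is induced from $G_{\Q_{p^2}}$, so it is invariant under $\otimes\eta$. Then $\cL'=\cL$, $\vareps'=-\vareps$ satisfies (1) and (3), but not (2), since $p$ is odd.

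The paper's own \Cref{VLmodp} reads the same case of \cite{bremez} as saying the reduction depends only on $\vareps$ and $v_p(\cL)$ on annuli of integral valuation. If that reading were right, (1) would be decoupled from (2), exactly as you feared. So you must extract from \cite{bremez} the precise set of $\cL$ on which the reduction is $\mu_\lambda\omega^{k/2}\oplus\mu_{\lambda^{-1}}\omega^{k/2-1}$ with $\lambda$ an odd, injective function of the leading coefficient. Presumably this is the boundary valuation $v_p(\cL)=2-\frac k2$ that you singled out. Hypothesis (ii) should be restricted accordingly; only on that set does your bookkeeping give a proof.
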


If $V_{k,\cL,\vareps}$ and $V_{k,\cL',\vareps'}$ are attached to two admissible modular forms $f,g$ that are congruent mod $p$ and satisfy $v_p(\cL)\ge -\frac{k}{2}+2$ and $v_p(\cL+\cL')>-C_{p,k}$, then $(i,ii,iii)$ and $(1,2)$ of Proposition \ref{prop:bremez} are satisfied, hence $\vareps=-\vareps'$. Note that by \cite[(10), Section 7.1]{bplinv}, one expects that $-\frac{(p-1)k}{2(p+1)}\le v_p(\cL)\le 0$ for ``most'' eigenforms, hence the bound $v_p(\cL)\ge -\frac{k}{2}+2$ does not shrink this range too much.

We further remark that the above discussion, and our findings, are compatible with the following conjecture of Calegari and Stein.

\begin{conj}[{\cite[Conjecture 3]{calstedisc}}]\label{conj:CS}
Let $k<p-1$ and let $f,g$ be two newforms of level $\Gamma_0(p)$. If $f$ and $g$ are congruent modulo $p$, then $\vareps(f)=-\vareps(g)$.
\end{conj}

For more general weights, our findings are compatible with the following result of Rozensztajn.

\begin{prop}[{\cite[Theorem 6.2.1(2)]{rozenext}}]\label{prop:rozen}
Let $f$ be a newform in $S_k(\Gamma_0(p))$. Assume that either $k> 2p+2$, or $\ovl\rho_{f}\vert_{I_p}$ is not in the case ``très ramifié'' of \cite[Théorème 1.2(i)]{bremez}. Assume further that $\rho_{f}\vert_{G_{\Q(\zeta_p)}}$ is absolutely irreducible. Then there exists a newform $g\in S_k(\Gamma_0(p))$ such that $f$ and $g$ are congruent modulo $p$ and $\vareps(f)=-\vareps(g)$.
\end{prop}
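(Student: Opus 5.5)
The plan is to deduce the existence of $g$ from a local statement at $p$, and then globalize it by a modularity lifting (patching) argument. The globalization needs the irreducibility hypothesis on $\rho_f\vert_{G_{\Q(\zeta_p)}}$ (which gives the Taylor--Wiles condition), and the local statement is where the hypothesis on $k$ or on the très ramifié case enters. Write $\ovl\rho=\ovl\rho_f$ and $\vareps=\vareps(f)$. For $\vareps'\in\{\pm1\}$, let $R^{\mathrm{st}}_{k,\vareps'}$ be the framed local deformation ring of $\ovl\rho\vert_{G_{\Q_p}}$. It parameterizes lifts that are semistable, non-crystalline, of Hodge--Tate weights $(0,k-1)$, with Frobenius on $D_{\mathrm{st}}$ acting as in $D_{k,\cL,\vareps'}$ up to the fixed twist. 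Equivalently, its $\Qp$-points are the representations $V_{k,\cL,\vareps'}$ (twisted) whose reduction is $\ovl\rho\vert_{G_{\Q_p}}$. This ring is nonzero for $\vareps'=\vareps$, since $\rho_f$ gives a point.

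\textbf{Step 1 (local nonemptiness for the opposite sign).} The key point is to show that $R^{\mathrm{st}}_{k,-\vareps}[1/p]\neq 0$, i.e. that there is some $\cL'\in\Qp$ with $\ovl V_{k,\cL',-\vareps}\cong\ovl\rho\vert_{G_{\Q_p}}$ up to semisimplification. The naive twist by the unramified quadratic character does not work, since it changes the reduction. Instead I would use the Breuil--Mézard philosophy. The Hilbert--Samuel cycle of the special fibre of $R^{\mathrm{st}}_{k,\vareps'}$ should equal the cycle attached to the reduction of the smooth $\GL_2(\Z_p)$-type $\mathrm{Sym}^{k-2}\otimes\mathrm{St}$. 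This type does not depend on $\vareps'$ except through a twist that is invisible on the $\GL_2(\Z_p)$ side. Hence the two sign components have the same nonzero cycle whenever the relevant Serre weights are present. Concretely, for $2<k<p-1$ this can be checked directly against the tables in \cite[Théorème 1.2]{bremez}. Outside the très ramifié case, each reduction type in those tables is realized by a whole range of $\cL$ for both values of the sign; in the très ramifié case the sign is forced, which is why that case is excluded. For $k>2p+2$ I would instead invoke the weight-part/multiplicity computations of Rozensztajn for semistable deformation rings, where all Serre weights of $\ovl\rho$ occur with positive multiplicity in both sign components.

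\textbf{Step 2 (globalization).} With both local rings nonzero, I would run Taylor--Wiles--Kisin patching for the global deformation problem. This problem is unramified outside $p$, and at $p$ it is given by $R^{\mathrm{st}}_{k,-\vareps}$; the fixed-determinant Hodge--Tate weights make it have the correct relative dimension. Since $\rho_f\vert_{G_{\Q(\zeta_p)}}$ is absolutely irreducible, patching applies. Because $\ovl\rho$ is modular, Kisin's argument shows that the patched module is supported on every irreducible component of the generic fibre of the local ring whose special fibre meets the support. The equality of cycles from Step 1 is exactly what guarantees this overlap. Hence the global ring $R_{k,-\vareps}$ has a $\Qp$-point which is modular. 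By local--global compatibility, it comes from a newform $g$ of weight $k$ and level $\Gamma_0(p)$: tame level $1$ because the deformation problem is unramified away from $p$, and $p$-new because the point is semistable and non-crystalline. Moreover $\vareps(g)=-\vareps$, since the sign is read off from $\varphi$ on $D_{\mathrm{st}}$, i.e. from $a_p$. Finally $\ovl\rho_g\cong\ovl\rho_f$, i.e. $f$ and $g$ are congruent mod $p$.

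I expect Step 1 to be the main obstacle. It is the only place where the hypotheses ($k>2p+2$, or the exclusion of the très ramifié case) are used, and for intermediate weights one needs genuine information on the reduction of $V_{k,\cL,\vareps'}$ as $\cL$ varies, beyond what \cite{bremez} records. I would attack it first by the cycle comparison above. If that fails to give positivity, I would fall back on constructing a point of sign $-\vareps$ explicitly by deforming along $\cL$ within a residue disc, using \cite{contor}-type local constancy of the reduction.
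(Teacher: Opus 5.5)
The paper gives no proof of this statement; it quotes \cite[Theorem 6.2.1(2)]{rozenext}. Your division of labour has the right shape: irreducibility over $\Q(\zeta_p)$ feeds the patching, and the weight/très ramifié hypothesis feeds a local statement at $p$. But Step 1, which carries all the content, rests on a false claim.

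A $\GL_2(\Z_p)$-type cannot see the Atkin--Lehner sign. For an unramified $\mu$, $\mathrm{St}\otimes(\mu\circ\det)$ and $\mathrm{St}\otimes(\mu\eta\circ\det)$ have the same restriction to $\GL_2(\Z_p)$. So the Breuil--Mézard statement for $\mathrm{Sym}^{k-2}\otimes\mathrm{St}$ computes only the \emph{sum} of the cycles of the two sign components, and says nothing about how that sum splits. The invisibility of $\eta$ does not relate the two components for a fixed residual representation either: twisting identifies $R^{\mathrm{st}}_{k,\vareps}(\ovl\rho)$ with $R^{\mathrm{st}}_{k,-\vareps}(\ovl\rho\otimes\ovl\eta)$, and $\ovl\eta\neq 1$ for $p$ odd. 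The split is in fact asymmetric in general. Your cycle argument never uses the hypothesis on $k$ or on the très ramifié case. It would therefore also produce opposite-sign congruences for très ramifié $\ovl\rho_f$ with $2<k\le 2p+2$, which is exactly the situation in which part (1) of the same theorem excludes them. Your own remark that the sign is ``forced'' there contradicts the equality of cycles you have just asserted.

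The concrete substitutes do not close the gap.
\begin{itemize}
\item Asking for $\ovl V_{k,\cL',-\vareps}\cong\ovl\rho\vert_{G_{\Q_p}}$ only up to semisimplification does not imply $R^{\mathrm{st}}_{k,-\vareps}(\ovl\rho\vert_{G_{\Q_p}})\neq 0$ when $\ovl\rho\vert_{G_{\Q_p}}$ is a non-split extension. The peu/très ramifié distinction, which is what the hypothesis is about, lives precisely in the extension class.
\item The fallback via \cite{contor} moves $\cL$ on the projective line of a \emph{fixed} sign, so it can never reach sign $-\vareps$.
\item For $k>2p+2$ you are invoking the source of the theorem itself.
\item The range $p-1\le k\le 2p+2$ is left to the faulty cycle argument.
\end{itemize}

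What is missing is a genuinely sign-sensitive local input. \cite{rozenext} obtains it from a Breuil--Mézard statement for \emph{extended} types, which record the whole Weil--Deligne representation. On the automorphic side these are representations of a compact-mod-centre group containing the Atkin--Lehner element (in the Steinberg case essentially the normaliser of the Iwahori), on which that element acts through the sign. They compute the cycles of the two sign components separately. One then checks that, under the stated hypotheses, the cycle for $-\vareps$ still contains an (extended) Serre weight of $\ovl\rho$; only then can an argument like your Step 2 be run.

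In Step 2 itself, it is not enough that a special fibre ``meets'' the support of the patched module. You need a common irreducible component of the special fibres, i.e. a common (extended) Serre weight.
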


We also remark that in the case when $2<k\le 2p+2$ and $\ovl\rho_{f}\vert_{I_p}$ is in the case ``très ramifié'' of \cite[Théorème 1.2(i)]{bremez}, \cite[Theorem 6.2.1(1)]{rozenext} predicts that there is no congruence modulo $p$ between newforms in $S_k(\Gamma_0(p))$ of opposite Atkin--Lehner signs. One can check using \cite[Théorème 1.2(i)]{bremez} that this case does not appear in our data.

We thank Alexandre Maksoud for pointing out the references \cite{calstedisc,rozenext} to us.

\section{Conjectural deep congruences between semistable representations}

\subsection{A local conjecture}

We have the ingredients we need to formulate a conjecture about congruences between semistable representations of $G_{\Q_p}$ of opposite Atkin--Lehner signs. Let $\cL,\cL^\prime\in\Q_p$ and let $h\ge 1$ be an integer.

\begin{conj}\label{locconj}
If $\cL,\cL'$ are admissible and $v_p(\cL+\cL^\prime)\ge -C_{p,k}$, then $V_{k,\cL,1}$ and $V_{k,\cL^\prime,-1}$ are congruent modulo $p^{-v_p(\cL)+1}$.
\end{conj}

\noindent Since $\cL,\cL'$ are admissible, meaning that $v_p(\cL),v_p(\cL')<-C_{p,k}$, the assumptions of \Cref{locconj} imply that $v_p(\cL)=v_p(\cL^\prime)$ 
and that some cancellation happens when one takes the sum of the $\cL$-invariants, normalized by the Atkin--Lehner sign (i.e. the difference of the derivatives of $a_p$, normalized by $p^{\frac{k-2}{2}}$). 

Representations of the same Atkin--Lehner sign $\vareps$ correspond to points of a ``family'' of representations of $G_{\Q_p}$ (in a sense that we do not make precise here) parameterized by the rigid analytic projective line $\bP^1$ over $\Q_p$, with variable given by the $\cL$-invariant. One expects the modulo $p^n$ reduction of such a family to be constant if one stays in a small enough rigid neighborhood of a point in $\bP^1$, i.e. if one makes a sufficiently small $p$-adic perturbation of the $\cL$-invariant. 
We discuss the situation further in Section \ref{secsamesign}. 
However, two representations of opposite Atkin--Lehner signs will live on two distinct $\bP^1$, and we do not know of a way to relate the corresponding points. 

Twisting with the unramified quadratic character $\eta$ of $G_{\Q_p}$ defines a map from the $\bP^1$ of $G_{\Q_p}$-representations with positive Atkin--Lehner sign to the $\bP^1$ with negative sign. However, the original representation won't in general be congruent to its twist modulo a high power of $p$ (not even modulo $p$, unless the reduction is irreducible or a direct sum $\chi\oplus\eta\chi$ for some character $\chi$), so we do not know of a way to exploit this twist towards \Cref{locconj}.


The justification for Conjecture \ref{locconj} comes instead from our global conjecture. Consider two eigenforms $f,g$ satisfying the assumptions and the conclusions of Conjecture \ref{globconj}, and choose $\cL=\cL(f)$, $\cL^\prime=\cL(g)$, $\vareps=\vareps(f)$. 
Then Conjecture \ref{globconj} predicts a trace-congruence modulo $p^{-v_p(\cL)+1}$ between $\rho_{f,p}$ and $\rho_{g,p}$, hence between their restrictions to $G_{\Q_p}$, which in turn satisfy the assumptions of Conjecture \ref{locconj}. 

\subsection{The local conjecture implies a global congruence for $p\le 7$}\label{sec:locglob}

In this section, let $p\in\{2,3,5,7\}$. We write $I_{\Q_p}$ for the inertia subgroup of $G_{\Q_p}$. Recall that we identify $G_{\Q_p}$, hence $I_{\Q_p}$, with a fixed decomposition subgroup of $G_\Q$ at $p$. We denote by $\Q(\mu_p)(p)$ the maximal pro-$p$ extension of $\Q(\mu_p)$ unramified outside of $p$ and the archimedean places.

\begin{prop}\label{loctoglob}
Let $\rho,\rho^\prime\colon G_{\Q,p\infty}\to\GL_2(\Z_p)$ be two continuous representations, and let $n\ge 1$ be an integer. Then $\rho$ and $\rho^\prime$ are physically congruent modulo $p^h$ if and only if their restrictions to $I_{\Q_p}$ are physically congruent modulo $p^h$.
\end{prop}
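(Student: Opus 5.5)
The direction ``global $\Rightarrow$ local'' is immediate: if $\rho\bmod p^h$ and $\rho'\bmod p^h$ are conjugate by some $M\in\GL_2(\Z/p^h)$ as representations of $G_{\Q,p\infty}$, the same $M$ conjugates their restrictions to $I_{\Q_p}$. So the plan is to prove the converse by showing that, for $p\le 7$, the inertia group at $p$ surjects onto every finite quotient of $G_{\Q,p\infty}$ through which both representations factor. (I read the exponent in the statement as $h$; the integer $n$ plays no role.)

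\emph{Step 1: reduce to a finite Galois group.} Write $\rho_h,\rho'_h\colon G_{\Q,p\infty}\to\GL_2(\Z/p^h)$ for the reductions modulo $p^h$. Let $L$ be the fixed field of $\ker(\rho_h\times\rho'_h)$. Then $L/\Q$ is a finite Galois extension, unramified outside $p\infty$, and both $\rho_h$ and $\rho'_h$ factor through $\Gal(L/\Q)$. The subfield cut out by $\ker(\ovl\rho\times\ovl\rho')$ has Galois group inside $\GL_2(\mathbb F_p)^2$. Moreover $\Gal(L/L')$ is a $p$-group, where $L'$ is that residual subfield, because the kernel of $\GL_2(\Z/p^h)\to\GL_2(\mathbb F_p)$ is a $p$-group. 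The claim to establish is
\[ \text{the image of } I_{\Q_p} \text{ in } \Gal(L/\Q) \text{ is all of } \Gal(L/\Q). \]

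\emph{Step 2: surjectivity of inertia (the main obstacle).} This is where the hypothesis $p\le 7$ enters, via Chenevier's result \cite{chenquelques}. For these primes, $\Q(\mu_p)$ has class number one and $p$ is totally ramified in it. Hence the maximal pro-$p$ extension $\Q(\mu_p)(p)$, unramified outside $p\infty$, has a unique prime above $p$, and that prime is totally ramified. This gives surjectivity of inertia onto $\Gal(\Q(\mu_p)(p)/\Q)$. The residual part is the harder half: one must show that the residual field $L'$ lies in $\Q(\mu_p)$, or at least is totally ramified at $p$. For this I would use the Odlyzko-type discriminant bounds, as Chenevier does. The images in $\GL_2(\mathbb F_p)$ for $p\le 7$ are small, and a Minkowski/Odlyzko argument shows there is no room for a nontrivial extension of $\Q$ unramified at $p$, or for a residue field extension at $p$. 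From $L'\subseteq\Q(\mu_p)$ it follows that $L\subseteq\Q(\mu_p)(p)$, since $\Gal(L/L')$ is a $p$-group. If Chenevier's statement is only phrased in terms of decomposition groups, I would combine it with the total ramification of the unique prime above $p$ in $\Q(\mu_p)(p)/\Q$ to pass from the decomposition group to inertia.

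\emph{Step 3: conclude.} Suppose $M\in\GL_2(\Z/p^h)$ satisfies $\rho'_h(i)=M\rho_h(i)M^{-1}$ for all $i\in I_{\Q_p}$. Given $g\in G_{\Q,p\infty}$, Step 2 provides $i\in I_{\Q_p}$ with the same image as $g$ in $\Gal(L/\Q)$. Since $L$ was defined using the product representation, this single $i$ gives both $\rho_h(g)=\rho_h(i)$ and $\rho'_h(g)=\rho'_h(i)$. Therefore $\rho'_h(g)=M\rho_h(g)M^{-1}$, so $\rho$ and $\rho'$ are physically congruent modulo $p^h$. Working with the compositum $L$, rather than with the two fixed fields separately, is what makes this simultaneous choice of $i$ possible.
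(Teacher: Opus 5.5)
Your proposal follows the paper's proof. The paper also invokes Chenevier to get that $\rho,\rho'$ factor through $\Gal(\Q(\mu_p)(p)/\Q)$ and that $I_{\Q_p}$ surjects onto this group, and then, as in your Step 3, picks for each element a single inertia lift that works for both representations at once.

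One slip needs repair in Step 2. The field $L'$ cut out by the non-semisimplified $\ovl\rho\times\ovl\rho'$ need not lie in $\Q(\mu_p)$: a non-split $\ovl\rho$ with trivial semisimplification already cuts out a nontrivial extension of $p$-power degree. So run that step with $\ovl\rho^{\mathrm{ss}}\times(\ovl\rho')^{\mathrm{ss}}$, which is what the cited result of Chenevier actually controls. Then $L\subseteq\Q(\mu_p)(p)$ follows as you say, because the rest of the image is a $p$-group.
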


\begin{proof}
Let $g\in G_{\Q,p\infty}$. By \cite[Corollary 1.4 and Proposition 1.8(i)]{chenquelques}, both $\rho$ and $\rho^\prime$ factor through $\pi\colon G_\Q\onto\Gal(\Q(\mu_p)(p)/\Q)$, and the composition
\begin{equation} I_{\Q_p}\into G_\Q\to\Gal(\Q(\mu_p)(p)/\Q), \end{equation}
is surjective. For every $g\in\Gal(\Q(\mu_p)(p)/\Q)$, let $\wtl g$ be an element of $I_{\Q_p}$ mapping to $g$. Then, for every $s\in G_\Q$, $\rho(s)=\rho(\wtl{\pi(g)})$ and $\rho^\prime(s)=\rho^\prime(\wtl{\pi(g)})$. 

Now assume that $\rho\vert_{I_{\Q_p}}$ and $\rho^\prime\vert_{I_{\Q_p}}$ are physically congruent modulo $p^h$. Then, for every $s\in G_\Q$, $\rho(\wtl{\pi(g)})$ and $\rho^\prime(\wtl{\pi(g)})$ are congruent modulo $p^h$, which implies that $\rho(s)$ and $\rho(s^\prime)$ are also congruent modulo $p^h$.
\end{proof}

\begin{rem}\mbox{ }
\begin{enumerate}
\item It is essential to the above argument that the element $\wtl{\pi(g)}$ only depends on $g\in G_\Q$, and not on the chosen representation of $G_\Q$.
\item In the statement of Proposition \ref{loctoglob}, we start with $\Z_p$-valued representations: if we started with $\Q_p$-valued representations, we would encounter the problem that physical congruence after restriction to $I_{\Q_p}$ is defined in terms of an $I_{\Q_p}$-stable lattice, which might not be $G_\Q$-stable.
\end{enumerate}
\end{rem}

We deduce the following corollary. Let $f,g$ be two newforms of weight $k$ and level $\Gamma_0(p)$, such that $p$ splits completely in the Hecke field of $f$. Let $\cL,\cL'$ and $\vareps,\vareps'$ be their respective $\cL$-invariants and Atkin--Lehner signs.

\begin{cor}\label{cor:chen}
Assume that:
\begin{itemize}
\item $p\le 7$;
\item Conjecture \ref{locconj} holds for $p$;
\item $v_p(\cL)=v_p(\cL')$.
\end{itemize}
Then $f$ and $g$ are congruent modulo $p^{h+1}$, with $h=v_p(\vareps\cL-\vareps'\cL')-v_p(\cL)$.
\end{cor}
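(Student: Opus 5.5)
We reduce the global congruence to the local one via Proposition \ref{loctoglob}. The plan is to pass from the Hecke eigensystems of $f$ and $g$ to their $p$-adic Galois representations $\rho_{f,p},\rho_{g,p}$, restrict these to $G_{\Q_p}$, and apply Conjecture \ref{locconj} there. We then lift the local physical congruence back to $G_\Q$ using Chenevier's result (available since $p\le 7$), and conclude by Remark \ref{hecketogal}.

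\emph{Step 1: reduce to Conjecture \ref{locconj}.} Since $f,g$ have level $\Gamma_0(p)$ and trivial character, $\rho_{f,p}\vert_{G_{\Q_p}}$ and $\rho_{g,p}\vert_{G_{\Q_p}}$ are semistable non-crystalline of Hodge--Tate weights $(0,k-1)$. By \Cref{sec:Linv}, up to the same twist, they are $V_{k,\cL,\vareps}$ and $V_{k,\cL',\vareps'}$.
\begin{itemize}
\item If $\vareps=-\vareps'$, after possibly swapping $f,g$ we are in the setting $V_{k,\cL,1}$, $V_{k,\cL',-1}$, and $\vareps\cL-\vareps'\cL'=\pm(\cL+\cL')$.
\item If $\vareps=\vareps'$, then $h=v_p(\cL-\cL')-v_p(\cL)$. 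Here we use the same-sign analogue instead (the local constancy discussed later, Section \ref{secsamesign}); alternatively we twist by the unramified quadratic character $\eta$, which acts trivially on inertia, so only the inertial restriction matters.
\end{itemize}
The hypothesis $v_p(\cL)=v_p(\cL')$ together with the size of the cancellation gives the congruence exponent. Conjecture \ref{locconj} is stated with exponent $-v_p(\cL)+1$ under the threshold condition; we need it in the refined form ``exponent $h+1$ when $v_p(\vareps\cL-\vareps'\cL')=v_p(\cL)+h$''. So we read Conjecture \ref{locconj} as providing a congruence modulo $p^{h+1}$. This is a rescaling of the threshold $-C_{p,k}$ by the cancellation amount, and we make this reading explicit.

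\emph{Step 2: choose lattices.} Choose $G_\Q$-stable $\Z_p$-lattices for $\rho_{f,p},\rho_{g,p}$, which is possible since $p$ splits completely in $K_f$ (and in $K_g$, which we assume or deduce). The local conjecture gives physical congruence of $I_{\Q_p}$-restrictions only for \emph{some} $I_{\Q_p}$-stable lattices, which need not be $G_\Q$-stable. To bypass this, we use that by Chenevier the image of $I_{\Q_p}$ in $\Gal(\Q(\mu_p)(p)/\Q)$ is everything. Hence every $I_{\Q_p}$-stable lattice is $G_\Q$-stable, since $\rho$ factors through that quotient and $\rho(G_\Q)=\rho(I_{\Q_p})$.

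\emph{Step 3: conclude.} Proposition \ref{loctoglob} then upgrades the local physical congruence modulo $p^{h+1}$ to a global one. Physical congruence implies trace-congruence, and Remark \ref{hecketogal} turns this into congruence of Hecke eigensystems modulo $p^{h+1}$.

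\emph{Main obstacle.} The main difficulty is Step 1: matching the precise exponent $h+1$ to what Conjecture \ref{locconj} literally provides, and handling the sign bookkeeping. Only the opposite-sign case follows directly from the conjecture; the same-sign case must be treated separately or excluded.
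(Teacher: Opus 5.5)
\textbf{Verdict: there is a genuine gap, in Step 1.} Steps 2--3 are fine.

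Your chain of reductions is exactly the deduction the paper has in mind: Conjecture \ref{locconj} on $G_{\Q_p}$, then Proposition \ref{loctoglob} via Chenevier, then Remark \ref{hecketogal}. The paper gives no argument beyond ``we deduce''. Your Step 2 observation is correct: $\rho(G_\Q)=\rho(I_{\Q_p})$ makes every $I_{\Q_p}$-stable lattice $G_\Q$-stable. It also makes every $I_{\Q_p}$-equivariant isomorphism modulo $p^m$ automatically $G_\Q$-equivariant, because the lift $\sigma\in I_{\Q_p}$ of $\pi(s)$ is the same for $f$ and $g$. This removes the $\Q_p$-versus-$\Z_p$ caveat the paper raises after Proposition \ref{loctoglob}. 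For Remark \ref{hecketogal} you could even skip lattices, since $\Tr\rho_{f,p}(s)-\Tr\rho_{g,p}(s)=\Tr\rho_{f,p}(\sigma)-\Tr\rho_{g,p}(\sigma)$.

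The problem is Step 1. ``Reading'' Conjecture \ref{locconj} as giving $p^{h+1}$ is assuming a different conjecture. As stated, Conjecture \ref{locconj} applies only when $\vareps=-\vareps'$, both $\cL,\cL'$ are admissible, and $v_p(\cL+\cL')\ge -C_{p,k}$; it then yields the exponent $-v_p(\cL)+1$. None of these hypotheses appears in the corollary.

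What your argument honestly proves is this: under those extra hypotheses, $f$ and $g$ are congruent modulo $p^{-v_p(\cL)+1}$. That implies the claimed $p^{h+1}$ exactly when $h\le -v_p(\cL)$, i.e. when $v_p(\cL+\cL')\le 0$. When $v_p(\cL+\cL')>0$, the required exponent exceeds what the conjecture provides.

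Your same-sign workarounds also fail:
\begin{itemize}
\item Section \ref{secsamesign} only treats even $k$ with $2<k<p$, so $k\in\{4,6\}$ when $p\le 7$. Even there, Corollary \ref{loccor} gives a congruence modulo $p^{h}$, not $p^{h+1}$.
\item The $\eta$-twist is indeed invisible on $I_{\Q_p}$, but it turns $(\cL',\vareps)$ into $(\cL',-\vareps)$. The conjecture is then triggered by $v_p(\cL+\cL')$, whereas in the same-sign case the corollary's $h$ is governed by $v_p(\cL-\cL')$.
\end{itemize}

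The statement as literally written cannot be what is intended. Since $v_p(\cL)=v_p(\cL')$ forces $h\ge 0$, combining it with Conjecture \ref{locconj} would predict a mod $p$ congruence between any two forms with equal $\cL$-valuation. Yet the $p=7$, $k=20$ table shows two forms with $v_p(\cL)=-1$ in different residue classes mod $7$. These forms are non-admissible, so Conjecture \ref{locconj} itself says nothing about them.

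So what is missing is not a further proof idea but hypotheses. You need to add those of Conjecture \ref{locconj} to the corollary, together with $v_p(\cL+\cL')\le 0$ (or else assume a version of the conjecture with exponent $h+1$). With that change your argument goes through.
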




Based on Corollary \ref{cor:chen}, one might wonder if deep congruences are just a small prime phenomenon. However, we detect the same patterns for primes $p\ge 11$, for which Proposition \ref{loctoglob} does not hold in virtue of \cite[Proposition 1.9]{chenquelques} (note however that it can still hold for specific eigenforms, for instance those in \cite[Example 6.3]{anbogrtr}).

\section{Same-sign congruences in small weight}\label{secsamesign}

While the congruences between opposite-sign representations predicted by \Cref{locconj} remain quite obscure, the situation for same-sign congruences is quite simpler. In particular, we show that for small weights, the $\cL$-invariant controls congruences between semistable representations of $G_{\Q_p}$ of the same Atkin--Lehner sign. 
We rely on the fact that such representations live in families (of constant residual representation) locally on a rigid projective line, and on the constancy results modulo powers of $p$ proved for such families in \cite{contor}. The main result of the section is Proposition \ref{samesigncong}. 
It is unclear whether this kind of result is helpful in understanding the opposite-sign congruences predicted by \Cref{locconj}, since we do not know of a way to move between the projective lines parameterizing families of opposite sign that would also allow one to control the mod $p^n$ reduction.

Let $k>2$, so that the representation $V_{k,\cL,\vareps}$ is now irreducible. 
For $n\in\Z$, let $X_n$ be the locus in $\bP^1$ where $v_p(\cL)=-n$: it is an affinoid annulus of inner and outer radius $p^{-n}$, i.e. $\Spm A_n$ for
\[ A_n=\Q_p\langle p^{-n}\cL,p^{-n}\cM\rangle/(\cL\cM-p^{2n}). \]
We denote by $A_n^+$ the ring of power-bounded functions in $A_n$.

We assume from now on that $k$ is even and $2<k<p$. Let $n$ be an integer satisfying $-\frac{k}{2}+2\le n<0$. Then \cite[Théorème 1.2]{bremez} implies the following.

\begin{prop}\label{VLmodp}
The residual representation $\ovl V_{k,\cL,\vareps}$ is multiplicity-free and constant as $\cL$ varies over $X_n(\Qp)$.
\end{prop}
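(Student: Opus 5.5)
The plan is to read the statement off directly from case (iii) of \cite[Théorème 1.2]{bremez}, as was done for Proposition \ref{prop:bremez}. No new argument is needed beyond matching normalisations.

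First I will fix conventions. Breuil--Mézard parameterise semistable representations by a $(\varphi,N)$-module with $\varphi$-eigenvalues built from $a_p=-\vareps p^{\frac{k-2}{2}}$ and an $\cL$-invariant. I will check that, after the duality in the definition of $V_{k,\cL,\vareps}$ and a possible twist by a character, their datum matches ours. The two points to verify are:
\begin{itemize}
\item $\vareps$ enters only through the unramified character with Frobenius eigenvalue $\pm$ (a unit);
\item $\cL$ enters through its valuation and, possibly, its leading coefficient.
\end{itemize}
The hypotheses $k$ even, $2<k<p$ and $-\frac{k}{2}+2\le n<0$ correspond to $\cL\in X_n$, i.e. $v_p(\cL)=-n$. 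I will check that these place every $\cL\in X_n(\Qp)$ in the reducible case (iii) of their theorem, rather than in the irreducible or "très ramifié" cases.

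In case (iii) the semisimplified reduction has the shape $\omega^{m}\mu_{\lambda}\oplus\omega^{k-1-m}\mu_{\lambda^{-1}}$. Here the exponent $m$ is an explicit affine function of $v_p(\cL)$ (and $k$), and $\lambda$ is a unit determined by $\vareps$ together with the relevant leading-term datum. For constancy, I will show that on $X_n$ every ingredient of the formula depends only on $(k,n,\vareps)$. The exponent $m$ depends only on $n$. The unramified parts should depend only on $\vareps$ and on the residue of a quantity that is constant on the annulus, plausibly a normalised combination such as $p^{n}\cL$ reduced modulo the relevant ideal.

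Multiplicity-freeness is then a separate check. The two characters $\omega^{m}\mu_\lambda$ and $\omega^{k-1-m}\mu_{\lambda^{-1}}$ must be distinct. Restricted to inertia they are $\omega^m$ and $\omega^{k-1-m}$, so it suffices that $2m\not\equiv k-1 \pmod{p-1}$. This holds automatically since $k-1$ is odd and $p-1$ is even.

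The main obstacle is the leading-coefficient dependence. Proposition \ref{prop:bremez} shows that the reduction is sensitive to $\vareps$ combined with $\cL$ modulo $p^{v_p(\cL)+1}$. A priori the Breuil--Mézard recipe might make $\lambda$ vary with the residue class of $p^{n}\cL$ across $X_n(\Qp)$, which would contradict constancy. I would first determine precisely which residue enters $\lambda$, comparing with the proof of Proposition \ref{prop:bremez}. If it is genuinely the leading coefficient, the statement should be repaired to constancy on each residue disc of $X_n$, i.e. on the sub-annuli where $\overline{p^{n}\cL}$ is fixed. The rest of the section's use of $X_n$ would then be adjusted accordingly.
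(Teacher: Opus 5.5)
\textbf{Verdict: genuine gap.} The constancy on $X_n$, which is the actual content of the proposition, is never established.

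Your route is the paper's: locate the relevant case of \cite[Théorème 1.2(iii)]{bremez} and read off $\ovl V_{k,\cL,\vareps}$. Your multiplicity-freeness argument is correct and is what the paper means by the two diagonal characters being ``clearly distinct''. The inertial exponents sum to $k-1$, which is odd, while $p-1$ is even, so they cannot agree modulo $p-1$.

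The constancy step, however, is left open. You never determine which quantities enter the unramified part of the formula. Instead you end with ``if it is genuinely the leading coefficient, the statement should be repaired'', and constancy on residue classes is a different statement. The paper's proof consists exactly of the assertion you leave unchecked: in the sub-case it invokes (``$\ell(f)$ a negative integer and $k>2$''), the isomorphism class given in loc.\ cit.\ depends only on the Frobenius parameter $a$ and on $v_p(\cL)$, not on $\overline{p^{-v_p(\cL)}\cL}$. Until you extract that from the explicit formula, your write-up proves only multiplicity-freeness.

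Your hesitation is nonetheless well founded, and you cannot settle it by appealing to Proposition \ref{prop:bremez}. That proposition is incompatible with the present one on their common range, which is nonempty as soon as $6\le k<p-1$.
\begin{itemize}
\item Take $\cL\in X_n$, $\cL'=-\cL\in X_n$ and $\vareps'=\vareps$.
\item Constancy on $X_n$ gives condition (1), and condition (2) holds trivially.
\item Proposition \ref{prop:bremez} then forces (3), i.e.\ $\vareps=-\vareps$, which is absurd.
\end{itemize}
More generally, $D_{k,\cL,-\vareps}$ differs from $D_{k,\cL,\vareps}$ only by the sign of $\varphi$, so $V_{k,\cL,-\vareps}\cong V_{k,\cL,\vareps}\otimes\eta$ with $\eta$ unramified quadratic. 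The diagonal characters have distinct restrictions to $I_{\Q_p}$, so twisting by $\ovl\eta$ changes the class. Hence constancy on $X_n$ for both signs excludes every opposite-sign mod $p$ congruence inside $X_n$, which is precisely what Proposition \ref{prop:bremez} asserts.

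So one of the two statements must be restricted, and the only way to close your gap is to read the explicit formula in \cite{bremez} (together with \cite{gueparht}). Check whether the leading coefficient of $\cL$ enters at the valuations in question. Before doing this, fix the sign convention. As literally defined, $X_n$ has $v_p(\cL)=-n\in(0,\frac k2-2]$, whereas Corollary \ref{loccor} takes $n=v_p(\cL)$. The two readings give different valuation ranges, which may fall under different sub-cases of loc.\ cit.
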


\begin{proof}
In the notation of \cite{bremez}, we are in the case when $\ell(f)$ is a negative integer and $k>2$, i.e. the last case of \cite[Théorème 1.2(iii)]{bremez}. The result follows from the fact that the isomorphism class of $\ovl V_{k,\cL,\vareps}$ given in \emph{loc. cit.} only depends on $a$ and $v_p(\cL)$, and that the two characters appearing on its diagonal are clearly distinct.
\end{proof}

By Proposition \ref{VLmodp} and \cite[Theorem 4.14]{contor}, there exists a continuous representation $\rho_n\colon G_{\Q_p}\to\GL_2(A_n^+)$ that specializes to a lattice in $V_{k,\cL,\vareps}$ at every $\cL\in X_n(\Qp)$. 

\begin{prop}\label{samesigncong}
Let $h\in\Z_{\ge 2}$. If $\cL_0,\cL_1\in X_n(\Q_p)$ satisfy $v_p(\cL_0-\cL_1)>p^{n+h-1}$, then $V_{k,\cL_0,\vareps}$ and $V_{k,\cL_1,\vareps}$ are physically congruent modulo $p^{h}$. 
\end{prop}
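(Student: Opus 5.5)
The plan is to deduce the congruence from the integral family $\rho_n\colon G_{\Q_p}\to\GL_2(A_n^+)$ constructed just above from Proposition \ref{VLmodp} and \cite[Theorem 4.14]{contor}. For $\cL\in X_n(\Q_p)$, write $\mathrm{ev}_\cL\colon A_n^+\to\Z_p$ for evaluation at $\cL$. Then $\rho_{\cL}:=\mathrm{ev}_\cL\circ\rho_n$ is a $G_{\Q_p}$-stable $\Z_p$-lattice in $V_{k,\cL,\vareps}$. By the definition of physical congruence, it is enough to show that $\rho_{\cL_0}$ and $\rho_{\cL_1}$ agree entrywise modulo $p^h$. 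For this it suffices to prove
\[ v_p\bigl(F(\cL_0)-F(\cL_1)\bigr)\ge h \quad\text{for every } F\in A_n^+ .\]
Once this holds, the two specialized lattices reduce to literally the same $\Z/p^h$-valued matrix representation of $G_{\Q_p}$, so they are isomorphic.

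The key step is an explicit description of $A_n^+$. Since $X_n$ is the annulus of radius exactly $p^{-n}$, I would introduce the normalized unit coordinate $u$. This is $\cL$ times the power of $p$ that makes $|u|=1$ on $X_n$, so that $p^{-n}\cM$ becomes a unit multiple of $u^{-1}$. Then I would check directly from the presentation $A_n=\Q_p\langle p^{-n}\cL,p^{-n}\cM\rangle/(\cL\cM-p^{2n})$ that $A_n^+=\Z_p\langle u,u^{-1}\rangle$. This is the standard fact that the power-bounded functions on a circle of radius $1$ are the Laurent series with integral coefficients tending to $0$. Any $F\in A_n^+$ can therefore be written as $F=\sum_{j\in\Z}c_ju^j$ with $c_j\in\Z_p$ and $c_j\to 0$. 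For units $u_0,u_1\in\Z_p^\times$, each term satisfies $u_0^j-u_1^j\in(u_0-u_1)\Z_p$ for every $j\in\Z$, where negative $j$ use $u_0^{-j}-u_1^{-j}=-(u_0u_1)^{-j}(u_0^{j}-u_1^{j})$. Summing the terms gives
\[ v_p\bigl(F(u_0)-F(u_1)\bigr)\ge v_p(u_0-u_1). \]

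It then remains to translate the hypothesis on $\cL_0-\cL_1$ into $v_p(u_0-u_1)\ge h$. Since $u$ is $\cL$ rescaled by a fixed power of $p$, $v_p(u_0-u_1)$ equals $v_p(\cL_0-\cL_1)$ shifted by a constant depending only on $n$. The hypothesis of Proposition \ref{samesigncong}, which I read as a bound on $v_p(\cL_0-\cL_1)$ in terms of $n+h-1$, is then exactly the condition $v_p(u_0-u_1)\ge h$ once the strict inequality between integers is turned into a non-strict one.

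The main obstacle I expect is this bookkeeping of normalizations. The statement mixes the radius $p^{-n}$ with the valuation $v_p(\cL)=-n$, and I would need to pin down which power of $p$ makes $u$ a unit, so that the exponent in the hypothesis matches $h$ exactly. A secondary point to verify is that \cite[Theorem 4.14]{contor} really provides coefficients in $A_n^+$ itself, and not in a finite extension or after inverting $p$. If only a bounded denominator $p^{-c}$ appears, the same argument still works but loses $c$ in the exponent, and I would have to check whether the residual multiplicity-freeness of Proposition \ref{VLmodp} rules this out.
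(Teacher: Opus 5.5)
Your argument is correct. It reaches the statement by a more elementary route than the paper.

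\textbf{The paper's route.} The paper performs the same rescaling you have in mind, via the isomorphism $B_n=\Q_p\langle\zeta_1,\zeta_2\rangle/(\zeta_1\zeta_2-1)\to A_n$ with $\zeta_1\mapsto p^{-n}\cL$. It then treats the key estimate as a black box: it applies \cite[Example 5.18 and Theorem 5.19]{contor} (with $m=0$, $e=1$) to $g^\ast\rho_n$. This gives pointwise constancy mod $p^h$ on wide open discs of radius $p^{1-h}$ on the unit circle, which it pulls back to radius $p^{1-h-n}$ in the $\cL$-coordinate.

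\textbf{Your route.} You prove the needed estimate by hand. Identifying $A_n^+$ with $\Z_p\langle u,u^{-1}\rangle$ and using $u_0^j-u_1^j\in(u_0-u_1)\Z_p$ for all $j\in\Z$ gives $v_p(F(\cL_0)-F(\cL_1))\ge v_p(u_0-u_1)$ for every matrix entry $F$. Hence the two specialized lattices are literally equal mod $p^h$. For $\Q_p$-points, the wide open disc $|u-u_0|<p^{1-h}$ is exactly the set $v_p(u-u_0)\ge h$, so you recover precisely the paper's domain.

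\textbf{Your normalization worry.} This resolves itself from the presentation of $A_n$. The coordinate $u=p^{-n}\cL$ is the paper's $\zeta_1$, and $p^{-n}\cM=u^{-1}$ on the nose. So $v_p(u_0-u_1)=v_p(\cL_0-\cL_1)-n$, and the hypothesis, read as $v_p(\cL_0-\cL_1)>n+h-1$, is exactly $v_p(u_0-u_1)\ge h$. This means $v_p(\cL)=n$ on $X_n$. That agrees with the presentation of $A_n$, with the range $-\frac{k}{2}+2\le n<0$, and with Corollary \ref{loccor}, despite the sign in the paper's definition of $X_n$.

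\textbf{Your secondary worry.} This is moot, since the paper takes $\rho_n$ with values in $\GL_2(A_n^+)$.

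\textbf{Comparison.} Your route gives a self-contained, transparent proof. In it, the radius $p^{1-h-n}$ is visibly just ``relative precision $h$'' in $\cL$. The paper's route buys generality. The Conti--Torti theorem handles points over ramified extensions (the parameter $e$). It also handles more complicated domains of constant residual representation, such as the standard subdomains of $\bP^1$ in higher weight. There $A^+$ has no explicit Laurent-series description, so your divisibility argument is not directly available. On the annulus $X_n$, the general theorem essentially reduces to your computation.
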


Note that the exponent $n+h-1$ is equal to $-v_p(\cL_0)+h-1$.

\begin{proof}
Consider the isomorphism of $\Q_p$-affinoid algebras
\[ g\colon B_n\coloneqq\Q_p\langle\zeta_1,\zeta_2\rangle/(\zeta_1\zeta_2-1)\to A_n=\Q_p\langle p^{-n}\cL,p^{-n}\cM\rangle/(\cL\cM-p^{2n}) \]
mapping $\zeta_1$ to $p^{-n}\cL$ and $\zeta_2$ to $p^{-n}\cM$. The preimage $g^{-1}(A_n^+)$ is contained in the ring $B_n^+$ of power-bounded elements in $B_n$.
We apply \cite[Example 5.18 and Theorem 5.19]{contor} to the representation
\[ g^\ast\rho_n\colon G_{\Q_p}\to\GL_2(B_n^+), \]
taking $m=0$ and $e=1$ in the notation of \emph{loc. cit.} (but note that $n$ is the depth of congruence there, while $n$ is fixed for us and $h$ is the depth of congruence), and obtain that, for every $x_1\in\Spm B_n$ and $h\in\Z$, $h\ge 2$, $g^\ast\rho_n$ is pointwise constant mod $p^h$ on a wide open disc of center $x_1$ and radius $\min\{p^{1-h},p^{1-h}\lvert x_1\rvert\}=p^{1-h}$. We deduce that, for every $\cL_0\in B_n$ and $h\in\Z$, $h\ge 2$, $\rho_n$ is pointwise constant mod $p^h$ on a disc of center $x_1=p^{-n}\cL_0$ and radius $p^{1-h-n}$.
\end{proof}


\begin{cor}\label{loccor}
Let $\vareps\in\{\pm 1\}$, $h\in\Z_{\ge 2}$, and $\cL,\cL^\prime$ such that $n\coloneqq v_p(\cL)\in\Z$ and $v_p(\cL-\cL^\prime)\ge n+h$. Then $V_{k,\cL,\vareps}$ and $V_{k,\cL^\prime,\vareps}$ are physically congruent (hence trace-congruent) modulo $p^h$. 
\end{cor}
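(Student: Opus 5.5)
The plan is to deduce the corollary directly from Proposition \ref{samesigncong}, after putting both $\cL$-invariants on the same annulus and translating the valuation condition. The standing assumptions of this section remain in force: $k$ even with $2<k<p$, and $v_p(\cL)$ in the range where Proposition \ref{VLmodp} applies. This is what guarantees the existence of the family $\rho_m$ over $X_m$.

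\emph{Step 1 (same annulus).} Set $m\coloneqq -v_p(\cL)$, so that $\cL\in X_m(\Q_p)$ in the notation of Proposition \ref{samesigncong}. Since $h\ge 2$, the hypothesis gives $v_p(\cL-\cL')\ge v_p(\cL)+h>v_p(\cL)$. By the ultrametric inequality, $v_p(\cL')=v_p(\cL)$, so $\cL'$ also lies in $X_m(\Q_p)$. Thus $\cL$ and $\cL'$ are specializations of the same family $\rho_m\colon G_{\Q_p}\to\GL_2(A_m^+)$ from \cite[Theorem 4.14]{contor}.

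\emph{Step 2 (translate the radius).} The proof of Proposition \ref{samesigncong} shows that $\rho_m$ is pointwise constant modulo $p^h$ on the open disc centred at $\cL$ of radius $p^{1-h}\cdot p^{v_p(\cL)}$. This is the image under $g$ of the disc of radius $p^{1-h}$ around $p^{-v_p(\cL)}\cL$ in the $\zeta_1$-coordinate. So it suffices to check that $\lvert\cL-\cL'\rvert<p^{1-h}\lvert\cL\rvert$, that is, $v_p(\cL-\cL')>v_p(\cL)+h-1$. The hypothesis $v_p(\cL-\cL')\ge v_p(\cL)+h$ gives exactly this strict inequality. Specializing the constant mod $p^h$ reduction of $\rho_m$ at $\cL$ and at $\cL'$ then yields lattices in $V_{k,\cL,\vareps}$ and $V_{k,\cL',\vareps}$ with isomorphic reductions modulo $p^h$. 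This is physical congruence modulo $p^h$.

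\emph{Step 3 (trace congruence).} Trace congruence follows formally from physical congruence. If two lattices have isomorphic reductions modulo $p^h$, then their traces agree modulo $p^h$ for every $g\in G_{\Q_p}$, since the trace of the reduction is the reduction of the trace.

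The main obstacle is Step 2. The statement of Proposition \ref{samesigncong} mixes the index $n$ of $X_n$ (where $v_p=-n$) with $v_p(\cL)$, and it writes the radius condition as a power of $p$ rather than as a valuation. I would therefore rederive the radius directly from the change of variables $\zeta_1=p^{-m}\cL$ in its proof, rather than quote the displayed inequality, and check that the sign conventions give the bound $v_p(\cL)+h-1$ used above.
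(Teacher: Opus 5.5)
Your proposal is correct and takes the same route as the paper, which simply applies Proposition~\ref{samesigncong} with $\cL_0=\cL$ and $\cL_1=\cL'$; your Steps 1--3 spell out the routine checks the paper leaves implicit (both points lie in the same annulus, the radius translates correctly, and physical congruence implies trace congruence). One small slip: with $\zeta_1=p^{-v_p(\cL)}\cL$, the radius in the $\cL$-coordinate is $p^{1-h}\lvert\cL\rvert=p^{1-h-v_p(\cL)}$, not $p^{1-h}\cdot p^{v_p(\cL)}$, though the inequality $v_p(\cL-\cL')>v_p(\cL)+h-1$ that you actually verify is the right one.
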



\begin{proof}
The result follows by applying Proposition \ref{samesigncong} to $L=\Q_p$, so that $\gamma_{L/\Q_p}(h)=h$, and to $\cL=\cL_0$, $\cL^\prime=\cL_1$.
\end{proof}


\begin{rem}
The results of this section can likely be extended to any $k$ (not necessarily even) in the range $3\le k\le p+1$ via a study of the mod $p$ reductions computed in \cite{chitraoghate}. We didn't do this since it is unclear at the moment whether the result would have interesting consequences, and since the case of $k$ even corresponds to our global setting.
\end{rem}

\begin{rem}
In principle, one could apply \cite[Theorems 4.14 and 5.19]{contor} to any wide open rigid subspace of the projective line on which the residual representation is constant, even if $k$ and $v_p(\cL_0)$ do not satisfy the assumptions of Proposition \ref{samesigncong} (see for instance \cite[Proposition 6.10]{contor}). However, in such a generality we do not know of an analogue of \cite[Théorème 1.2]{bremez} describing the residual representation in terms of the $\cL$-invariant, hence we cannot give an explicit estimate of the domain of $\bP^1$ over which the mod $p^h$ representation is constant. Moreover, while in the Breuil--Mézard setting the domains of constant mod $p$ reduction are as simple as possible (discs or annuli), they can be arbitrarily more complicated in higher weight: 
Rozensztaijn \cite{rozencrys} shows that they are always ``standard subdomains'' of $\bP^1$, i.e. wide open discs minus unions of closed discs, but the number of closed discs appearing in this description can become arbitrarily large as $k$ grows.
\end{rem}

\begin{rem}\label{remord}
If $k=2$, then the representation $V_{2,\cL,\vareps}$ is reducible, an extension of the unramified character $\chi_2=\lambda_\vareps$ mapping a lift of Frobenius to $a$, by $\chi_1=\chi_\cyc\lambda_\vareps$. The isomorphism class of this extension depends on the value of $\cL$, but $\chi_1,\chi_2$ do not, so that the trace of $V_{2,\cL,\vareps}$ is constant as $\cL$ varies over $\bP^1(\Qp)$, and by conjugating with diagonal matrices of entries $p^n, 1$ one can make any two $V_{2,\cL,\vareps}$ and $V_{2,\cL^\prime,\vareps}$ congruent modulo an arbitrarily large power of $p$. Therefore the above argument does not produce any interesting congruence in the case when $k=2$.
\end{rem}

\section{Computational evidence}
In order to obtain computational evidence supporting our global conjecture, there are two main computational challenges to be completed: Both the $\mathcal{L}$-invariants attached to newforms as well as the higher congruences between pairs of newforms need to be computed efficiently.
\subsection{Computing $\mathcal{L}$-invariants}
Thankfully, the efficient computation of $\mathcal{L}$-invariants has been addressed extensively in the literature, see \cite{anbogrtr}, \cite{bplinv} and \cite{graefcon}. Our computations rely on the following two approaches:
\begin{itemize}
\item The algorithm underlying the work of Bergdall-Pollack in \cite{bplinv}, which efficiently computes both classical and $p$-adic $L$-values using modular symbols and then applies the formula from Mazur--Tate--Teitelbaum's exceptional zero conjecture to obtain $\mathcal{L}$-invariants. Bergdall-Pollack kindly shared their highly optimized code with us, allowing the computation of (valuations of) $\mathcal{L}$-invariants in very high weights. 
\item For the direct comparison of $\mathcal{L}$-invariants needed in part $(ii)$ of Conjecture \ref{globconj}, we use the work of the second author together with Samuele Anni, Gebhard Böckle and \'Alvaro Troya in \cite{anbogrtr}, which computes $\mathcal{L}$-invariants using the Greenberg--Stevens formula. 
\end{itemize}
Thus, the main computational challenge in the present work is the efficient computation of higher congruences between modular forms, which we will explain in more detail in the next subsection.

\subsection{Computing deep congruences}\label{comps}
%
%
Let $f\in\mathcal{S}_k(\Gamma_0(N))^\mathrm{new}$ be a newform, where $p$ divides $N$ exactly. As before and throughout this section, we assume that Assumption \ref{main-ass} is satisfied. In particular, this means that the Galois conjugates of $f$ are in one-to-one correspondence with the primes $\mathfrak{p}$ of $K_f$ dividing $p$. Let $\mathcal{O}$ be the ring of integers of $K_f$. Fix a prime $\mathfrak{p}$ of $\cO$ dividing $p$. By assumption, we have $[\mathcal{O}/\mathfrak{p}:\Z/p]=f_\mathfrak{p}=1$. As an immediate consequence, we obtain that the natural embedding (note that $e_\mathfrak{p}=1$)
\[
\Z/p^m\hookrightarrow \mathcal{O}/\mathfrak{p}^m
\]
is an isomorphism, i.e. the $\text{mod } \mathfrak{p}^m$ reduction of $f$ is defined over $\Z/p^m$. By the correspondence between primes above $p$ and Galois conjugates of $f$, the same is true for each conjugate form. Thus, we only need to compute the Hecke eigensystem of $f$, reduce it modulo $\mathfrak{p}^m$ for each prime $\mathfrak{p}$ above $p$ and compare the results in $\Z/p^m$. The only remaining question is how many Hecke eigenvalues we need to compute in practice. For this, we can use the usual Sturm bound after taking into account that we are working away from $Np$. Let
\[
B(k,N)\coloneqq \frac{kI(Np)}{12}-\frac{I(Np)-1}{Np}
\]
with $I(Np)\coloneqq[\SL_2(\Z):\Gamma_0(Np)]=Np\prod_{q\mid Np}\left(1+\frac{1}{q}\right)$, where $q$ varies over the prime divisors of $Np$.

\begin{prop}[Sturm bound]
Let $f,g\in\mathcal{S}_k(\Gamma_0(N))$ be two eigenforms with associated eigensystems $(a_\ell(f))_{\ell\nmid N}$ and $(a_\ell(g))_{\ell\nmid N}$ away from $N$. Assume that their reductions are defined over $\Z/p^m$. If
\[
a_\ell(f)\equiv a_\ell(g) \mod p^m
\]
for all $\ell\nmid Np$, $\ell\leq B(k,N')$ with $N'\coloneqq Np^2\cdot\prod_{q\mid Np}q$. Then 
\[
a_\ell(f)\equiv a_\ell(g) \mod p^m
\]
for all $\ell\nmid Np$.
\end{prop}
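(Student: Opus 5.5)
The plan is to reduce the statement to the classical Sturm bound for congruences of $q$-expansions modulo $p^m$, applied to a pair of auxiliary forms that discard every coefficient indexed by an integer sharing a factor with $Np$. Let $\cO$ be a ring of integers containing the coefficients of $f$ and $g$, and let $\mathfrak p$ be a prime above $p$ for which the reductions land in $\Z/p^m$. For an eigenform $h\in\cS_k(\Gamma_0(N))$, define $h^\flat=\sum_{(n,Np)=1}a_n(h)q^n$. The first step is to check that $h^\flat$ is a cusp form of weight $k$ and level $\Gamma_0(N')$ with $N'=Np^2\prod_{q\mid Np}q$. To see this, apply for each prime $q\mid Np$ the operator $1-V_qU_q$, which removes the coefficients divisible by $q$. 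Each such application multiplies the level by $q^2$ if $q\nmid N$, and by $q$ if $q\mid N$, because $U_q$ preserves level $\Gamma_0(N)$ when $q\mid N$. To avoid tracking these cases separately, I would use the standard crude bound: twisting by the trivial character mod $q$ lands in level $\mathrm{lcm}(N,q^2)$, and this is certainly contained in level $N'$. The factor $p^2$ is there because $p$ is removed in addition to the primes of $N$; $p$ may or may not divide $N$, and both cases fit inside $N'$.

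The second step is to compare coefficients. Since $f,g$ are normalized eigenforms away from $N$, each coefficient $a_n$ with $(n,Np)=1$ is a polynomial with integer coefficients in the $a_\ell$ for primes $\ell\mid n$, $\ell\nmid Np$. This comes from multiplicativity together with the recursion $a_{\ell^{r+1}}=a_\ell a_{\ell^r}-\ell^{k-1}a_{\ell^{r-1}}$. Hence the hypothesis $a_\ell(f)\equiv a_\ell(g)\bmod p^m$ for primes $\ell\le B(k,N)$ gives $a_n(f^\flat)\equiv a_n(g^\flat)\bmod \mathfrak p^m$ for all $n$ up to that bound; the coefficients with $(n,Np)>1$ vanish on both sides. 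Here one should read the bound as the Sturm bound for the level $N'$, that is $kI(N')/12$ up to the small correction term in the definition of $B$, and I would interpret the statement accordingly.

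The third step applies Sturm's theorem modulo prime powers to $F=f^\flat-g^\flat\in\cS_k(\Gamma_0(N'))$ with coefficients in $\cO$. The argument is an induction on $m$. If $\mathfrak p^j$ divides all coefficients of $F$ with $j<m$, choose $\pi\in\cO$ with $v_{\mathfrak p}(\pi)=1$, after possibly enlarging $\cO$ or localizing. Then $F/\pi^j$ has $\mathfrak p$-integral coefficients, and its coefficients up to the Sturm bound vanish mod $\mathfrak p$. The mod $\mathfrak p$ Sturm theorem then forces all coefficients to vanish mod $\mathfrak p$, so $\mathfrak p^{j+1}$ divides all coefficients of $F$. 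After $m$ steps, $a_\ell(f)\equiv a_\ell(g)\bmod p^m$ for all $\ell\nmid Np$.

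The main obstacle is the first step: pinning down precisely the level of $h^\flat$ and matching it with the stated $B(k,N')$, including whether the index formula and the correction term $-(I-1)/(Np)$ are consistent with the classical bound $\frac{k}{12}[\SL_2(\Z):\Gamma_0(N')]$. I would first verify the level with the twisting-by-trivial-character description. If the correction term turns out not to be justified, I would fall back to the classical bound, which is only slightly larger.
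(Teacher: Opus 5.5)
Your proposal is correct and follows the paper's argument. The paper's proof simply combines the level computation for $\sum_{(n,Np)=1}a_nq^n$ (\cite[Lemma 4.6.5]{miyake}) with the Sturm bound modulo $p^m$ (\cite[Theorem 3.8]{taiwie}); you reconstruct both ingredients explicitly, and add the multiplicativity step that passes from primes $\ell$ to all $n$ prime to $Np$.

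Your worry about the bound goes away once you read the definition. $B(k,N')$ is computed with the index $I(N'p)$, not $I(N')$, and $I(N'p)=pI(N')$ because $p\mid N'$. Hence $B(k,N')>\frac{k}{12}I(N')$, so the classical mod-$\mathfrak p$ Sturm theorem at level $N'$ already suffices, with no need for the correction term.
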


\begin{proof}
This follows directly by combining the Sturm bound modulo $p^m$ in \cite[Theorem 3.8]{taiwie} with \cite[Lemma 4.6.5]{miyake}.
\end{proof}

\noindent Most of our computations happen for $N=p$: in this case, $N'=p^4$, $I(N'p)=I(p^5)=p^5+p^4$ and 
\[ B(k,N')=\frac{k(p^5+p^4)}{12}-\frac{p^5+p^4-1}{p^5}<\frac{k(p^5+p^4)}{12}. \]

\begin{rem}\label{rem:main-ass}
We should comment on Assumption \ref{main-ass}. First off, if one is interested in removing this assumption, things get significantly more involved, see for example \cite{taiwie}, which aims at computing reductions in complete generality. The downside of the approach in \cite{taiwie} is that it can only detect if there is a congruence between two members of different newform orbits, but it cannot isolate the actual members within the orbit. Since this is precisely what we are after, we had to use a different method. If $m=1$, one can replace Assumption \ref{main-ass} by just checking via Fermat's little theorem if the reduction is defined over $\Z/p$ and only working with those that are. Unfortunately, there is no analogue for $m>1$. 

In practice, Assumption \ref{main-ass} is at least often satisfied. For example, for $N=p=3$, the first violation of Assumption \ref{main-ass} occurs in weight $46$. 
So even in this very restrictive setup, we are able to collect significant data.
\end{rem}

\subsection{Data}\label{sec:data}

We give some samples of the data we computed. For each $p=3,5,7,11$, we give one table exhibiting deep congruences between $p$-new eigenforms of level $\Gamma_0(p)$ and a sample weight $k$. Our computations go up to weight over 200 for $p=3$, 150 for $p=5$, and 100 for $p=7,11$. Even if \Cref{globconj} is only stated in trivial tame level, we discovered the same patterns in the (smaller) data we computed for higher tame level; we present here some data for $p=3$ in level $\Gamma_0(6)$. More of the data that we computed is available on the repository \cite{deepcong}.

The tables can be read as follows: the value of $N$ corresponds to the level $\Gamma_0(N)$ of the forms involved; the integer in the first column corresponds to the exponent of a power of $p$, while the integers on the second column vary among all of the valuations of the $\cL$-invariants of the $p$-new eigenforms of level $\Gamma_0(p)$ and the given weight $k$. For each integer $n$ in the left entry, every collection of valuations encased by square bracket corresponds to all of the eigenforms whose Hecke eigenvalues belong to a single congruence class modulo $p^n$.

We also indicate the value of the constant $C_{p,k}$ from \eqref{Cpk} for each choice of $p$ and $k$: the forms whose $\cL$-invariants have valuations larger than $C_{p,k}$ do not fit into the pattern.

\begin{rem}\label{pairs}
Even when a certain $\cL$-invariant valuation appears with (even) multiplicity higher than 2, the corresponding eigenforms still split in pairs of congruent ones modulo a sufficiently high-power of $p$: in other words, the valuation of the $\cL$-invariant by itself does not seem to control which eigenforms share a deep congruence.
\end{rem}

\begin{table}[h]
\centering
\caption{$N=p=3, k=44$ $(-C_{p,k}=-8)$}
\begin{tabular}{|l|l|}
\hline
Depth $p^*$ & $v_p(\cL)$ \\ 
\hline
1 & $[0,-6,-6,-8,-8,-11,-11]$ \\  
\hline
2 & $[0, -8, -8], [-6, -6, -11, -11]$ \\
\hline
4 & $[0], [-6, -6], [-8, -8], [-11, -11]$ \\
\hline
9 & $[0], [-6], [-6], [-8, -8], [-11, -11]$ \\
\hline
11 & $[0], [-6], [-6], [-8], [-8], [-11,-11]$\\
\hline
15 & all distinct\\
\hline
\end{tabular}
\end{table}

\begin{rem}
The table for $N=p=3, k=48$ looks exactly the same as the one for $k=44$. This is in agreement with \cite[Conjecture 6.5]{anbogrtr}, and suggests a strengthening of that conjecture so as to include the fact that there is a bijection between newforms in the two weights that not only respects the valuations of the $\cL$-invariants, but also the depth of congruence between any two forms. It might be reflected by an isomorphism between the $\Z_p$-Hecke algebras acting on the spaces of forms in the two weights. 
\end{rem}



\begin{table}[h]
\centering
\caption{$N=p=5, k=32$ $(-C_{p,k}=-6)$}
\begin{tabular}{|l|l|}
\hline
Depth $p^*$ & $v_p(\cL)$ \\
\hline
1 & $[-1,-2,-2,-5,-5,-6,-6,-10,-10,-11,-11]$ \\
\hline
2 & $[-1],[-2,-2],[-5,-5],[-6,-6],[-10,-10],[-11,-11]$\\
\hline
4 & $[-1],[-2],[-2],[-5,-5],[-6,-6],[-10,-10],[-11,-11]$\\
\hline
7 & $[-1],[-2],[-2],[-5],[-5],[-6,-6],[-10,-10],[-11,-11]$\\
\hline
8 & $[-1],[-2],[-2],[-5],[-5],[-6],[-6],[-10,-10],[-11,-11]$\\
\hline
12 & $[-1],[-2],[-2],[-5],[-5],[-6],[-6],[-10],[-10],[-11,-11]$ \\
\hline
14 & all distinct\\
\hline
\end{tabular}
\end{table}




\begin{table}[h]
\centering
\caption{$N=p=7, k=20$ $(-C_{p,k}=-5)$}
\begin{tabular}{|l|l|}
\hline
Depth $p^*$ & $v_p(\cL)$ \\ 
\hline
1 & $[ 0,-1, -5, -5 ], [-1,-6, -6], [ -3, -3 ]$\\
\hline
2 & $[0,-1], [-1], [ -3, -3 ], [-5, -5 ], [-6, -6]$\\
\hline
3 & $[0], [-1], [-1], [ -3, -3 ], [-5, -5 ], [-6, -6]$\\
\hline
5 & $[0], [-1], [-1], [ -3], [-3 ], [-5, -5 ], [-6, -6]$\\
\hline
7 & $[0], [-1], [-1], [ -3], [-3 ], [-5], [-5 ], [-6, -6]$\\
\hline
8 & all distinct\\
\hline
\end{tabular}
\end{table}


\begin{table}[h]\label{11-18}
\centering
\caption{$N=p=11, k=18$ $(-C_{p,k}=-5)$}
\begin{tabular}{|l|l|}
\hline
Depth $p^*$ & $v_p(\cL)$\\
\hline
1 & $[ 0 ], [ 0 ], [ -1, -1 ], [ -3, -3 ], [ -5, -5 ], [ -4,-4,-6, -6], [ -5, -5 ]$\\
\hline
2 & $[ 0 ], [ 0 ], [ -1, -1 ], [ -3, -3 ], [ -4,-4], [ -5, -5 ], [ -5, -5 ], [-6, -6]$\\
\hline
3 & $[ 0 ], [ 0 ], [ -1], [-1 ], [ -3, -3 ], [ -4,-4], [ -5, -5 ], [ -5, -5 ], [-6, -6]$ \\
\hline
5 & $[ 0 ], [ 0 ], [ -1], [-1 ], [ -3], [-3 ], [ -4,-4], [ -5, -5 ], [ -5, -5 ], [-6, -6]$\\
\hline
6 & $[ 0 ], [ 0 ], [ -1], [-1 ], [ -3], [-3 ], [ -4],[-4], [ -5, -5 ], [ -5, -5 ], [-6, -6]$\\
\hline
7 & $[ 0 ], [ 0 ], [ -1], [-1 ], [ -3], [-3 ], [ -4],[-4], [ -5, -5 ], [ -5], [-5 ], [-6, -6]$\\
\hline
8 & all distinct\\
\hline
\end{tabular}
\end{table}


\begin{table}[h]\label{32}
\centering
\caption{$p=3, N=6, k=36$ $(-C_{p,k}=-7)$}
\begin{tabular}{|l|l|}
\hline
Depth $p^*$ & $v_p(\cL)$\\
\hline
1 & $[ -1, -4, -4,-9, -9, -11, -11]$ \\
\hline
2 & $[-1,-9, -9], [-4, -4, -11,-11]$ \\
\hline
5 & $[-1], [-4, -4],[-9, -9], [-11, -11]$ \\
\hline
8 & $[-1], [-4],[-4],[-9, -9],  [-11, -11]$ \\
\hline
13 & $[-1], [-4],[-4],[-9], [-9],  [-11, -11]$ \\
\hline
16 & all distinct \\
\hline
\end{tabular}
\end{table}


\subsubsection{Cancellation between the valuations of the $\cL$-invariants}\label{sec:cancellation}

We observed in Remark \ref{pairs} that the fact that $v_p(\cL(f))=v_p(\cL(g))$ for two $p$-new eigenforms $f,g$ is no guarantee that $f$ and $g$ will share a deep congruence. 
Our Conjecture \ref{locconj} suggests that the difference $v_p(\cL(f)+\cL(g))>v_p(\cL(f))$, i.e. the amount of cancellation happening in the sum of the $\cL$-invariants, give information about the exact power of $p$ appearing in the deep congruence. As one sees from the data below (corresponding to some of the tables presented earlier), the amount of cancellation is always larger than the one prescribed by Conjecture \ref{globconj}. 

Computing $v_p(\cL(f)+\cL(g))$ requires computing both $\cL$-invariants up to some precision, hence is computationally much more taxing than just looking for the individual valuations. For this reason, the data we have available is much scarcer.

The way to read the data is as follows: the first line contains the valuations of the $\cL$-invariants, as given in the previous subsection. The second line shows, for each pair of valuations corresponding to deeply congruent eigenforms, the valuation of $v_p(\cL(f)+\cL(g))$ together with the valuation of the individual $\cL$-invariants. Here is an example with $p=3$:

\medskip

$N=p=3$, $k=44$, $C_{p,k}=-8$

\smallskip

$v_p(\cL): [0, -6, -6, -8, -8, -11, -11]$

\smallskip

$[v_p(\cL(f)+\cL(g)),v_p(\cL(f))]: [-3, -6], [-3, -8], [-3, -11]$ 

\medskip

\noindent Once more, the data for weight 48 is identical. Here is another example, with $p=5$:

\medskip

$N=p=5$, $k=32$, $C_{p,k}=-6$

\smallskip

$v_p(\cL): [-1, -2, -2, -5, -5, -6, -6, -10, -10, -11, -11]$

\smallskip

$[v_p(\cL(f)+\cL(g)),v_p(\cL(f))]: [0, -2], [-1, -5], [-1, -6], [-2, -10], [-2, -11]$




\bigskip

\printbibliography

\bigskip

\begin{small}
\noindent\textsc{Andrea Conti, Heidelberg University,} \url{andrea.conti@iwr.uni-heidelberg.de}

\medskip

\noindent\textsc{Peter Mathias Gr\"af,} 
\url{pgraef03@gmail.com}
\end{small}

\end{document}